\documentclass[pdflatex,sn-mathphys-num]{sn-jnl}% Math and Physical Sciences Numbered Reference Style
\usepackage{mathrsfs}
\usepackage{graphicx}%
\usepackage{multirow}%
\usepackage{amsmath,amssymb,amsfonts}%
\usepackage{amsthm}%
\usepackage{mathtools,thm-restate}
\usepackage{mathrsfs}%
\usepackage[title]{appendix}%
\usepackage{xcolor}%
\usepackage{textcomp}%
\usepackage{manyfoot}%
\usepackage{booktabs}%
\usepackage{algorithm}%
\usepackage{algorithmicx}%
\usepackage{algpseudocode}%
\usepackage{listings}%
\usepackage{tikz}
\usetikzlibrary{patterns.meta}
\usepackage{hyperref}
\usepackage[capitalise, nameinlink]{cleveref}

\theoremstyle{thmstyleone}%
\newtheorem{theorem}{Theorem}%  meant for continuous numbers
\newtheorem{proposition}[theorem]{Proposition}% 
\newtheorem{cor}{Corollary}

\theoremstyle{thmstyletwo}%

\theoremstyle{thmstylethree}%
\newtheorem{definition}{Definition}%
\newtheorem{conj}{Conjecture}
\newtheorem{question}{Question}
\newtheorem{obs}{Observation}

\Crefname{question}{Question}{Questions}

\newcommand{\p}{\mathcal P}
\newcommand{\n}{\mathcal N}
\newcommand{\sg}{\mathcal{G}}
\newcommand{\mex}{\rm{mex}}
\newcommand{\dps}{\mathrm{DPS}}
\newcommand{\ps}{\mathrm{PS}}
\newcommand{\s}{S}
\newcommand{\crim}{\textsc{CRIM} \/}

\newcommand{\cp}{\mathrm{CP}}

\newcommand{\OAE}{\mathrm{OAE}}
\newcommand{\EAO}{\mathrm{EAO}}
\newcommand{\y}{\mathbb{Y}}
\newcommand{\nim}{\textsc{Nim}}

\usepackage{array}

\newcommand{\drawpartition}[1]{
    \begin{tikzpicture}[scale=0.31, transform shape, baseline=(current bounding box.center)]
        \foreach \rowlength [count=\y] in {#1} {
            \ifnum\rowlength>0
                \foreach \x in {1,...,\rowlength} {
                    \draw[thick] (\x-1, 1-\y) rectangle ++(1, -1);
                }
            \fi
        }
    \end{tikzpicture}
}

\begin{document}

%\title[CRIM]{CRIM: An Impartial Combinatorial Game \\on Integer Partitions}
% \title[CRIM]{CRIM: An Impartial Combinatorial Game played by removing rows and columns of Integer Partitions}
\title[CRIM]{CRIM: A Natural Game on Integer Partitions}

%%=============================================================%%
%% GivenName	-> \fnm{Joergen W.}
%% Particle	-> \spfx{van der} -> surname prefix
%% FamilyName	-> \sur{Ploeg}
%% Suffix	-> \sfx{IV}
%% \author*[1,2]{\fnm{Joergen W.} \spfx{van der} \sur{Ploeg} 
%%  \sfx{IV}}\email{iauthor@gmail.com}
%%=============================================================%%

\author[1]{\fnm{Ina} \sur{Bašić}}\email{ina.basic@tu-darmstadt.de}
\equalcont{These authors contributed equally to this work.}

\author[2]{\fnm{Eric} \sur{Gottlieb}}\email{gottlieb@rhodes.edu}
\equalcont{These authors contributed equally to this work.}

\author*[3]{\fnm{Matjaž} \sur{Krnc}}\email{matjaz.krnc@upr.si}
\equalcont{These authors contributed equally to this work.}

% Ina: fill this!
\affil[1]{\orgdiv{Department of Mathematics}, \orgname{Technical University of Darmstadt}, \orgaddress{\street{Karolinenplatz 5}, \city{Darmstadt}, \postcode{64289}, \country{Germany}}}

\affil[2]{\orgdiv{Department of Mathematics and Statistics}, \orgname{Rhodes College}, \orgaddress{\street{\\2000 North Parkway}, \city{Memphis}, \state{Tennessee}, \postcode{38112}, \country{USA}}}

\affil[3]{\orgdiv{FAMNIT}, \orgname{University of Primorska}, \orgaddress{\street{Glagoljaška 8}, \city{Koper}, \country{Slovenia}}}

%%==================================%%
%% Sample for unstructured abstract %%
%%==================================%%

\abstract{We analyze Column-Row Impartial Merge (CRIM), an impartial combinatorial game played on integer partitions. 
A move in CRIM consists of removing an arbitrary row or column from the corresponding Young diagram, with the remaining parts reattaching to form a single partition.
     We define \emph{rectairs} — a common generalization of rectangles and staircases — and characterize their $\mathcal{P}/\mathcal{N}$-status. 
     We define the \emph{meld} operation on partitions and show that the meld of losing rectairs is losing. 
  We introduce \emph{Odds-Are-Even} (OAE) and \emph{Evens-Are-Odd} (EAO) partitions, proving that all OAE partitions are $\mathcal{P}$-positions and characterizing the losing positions within EAO partitions. 
      We determine the $\mathcal{P}/\mathcal{N}$-status for staircases and for $2$- and $3$-part partitions. 
    We evaluate CRIM and its restrictions to certain partition families within the Conway-Gurvich-Ho classification scheme, establishing that CRIM is neither returnable nor domestic.
    We conjecture that every losing partition has even rank.}

\keywords{
    Integer partition,
    Young diagram,
    Combinatorial game,
    Grundy value,
    Conway-Gurvich-Ho classification}

%%\pacs[JEL Classification]{D8, H51}

\pacs[MSC Classification]{91A46, 05A17}

\maketitle

\section{Introduction}
Combinatorial games have been studied extensively since the pioneering work of Bouton, Sprague, and Grundy \cite{bouton1901nim,Spr35,Spr37,Gru39}. In his book On Numbers and Games \cite{Con76}, Conway extended and generalized this work and placed it in a broader theoretical framework. 
These foundational results have shaped the study of impartial games, providing tools to determine whether a position is winning or losing (i.e. $\p/\n$ status), to decompose complex games, and to identify winning strategies. 
Some well-known combinatorial games (e.g. \textsc{Nim}, \textsc{Sato-Welter}, and \textsc{Wythoff}) admit interpretations in which positions are (integer) partitions. 

Partitions play important roles in number theory, combinatorics, representation theory, and mathematical physics 
\cite{bohr86,AgarwalaAuluck1951,sagan2001symmetric,AuluckKothari1946}
and have been studied by mathematicians including Euler \cite{euler1797introductio}, Erd\H os \cite{erdos1946some}, Hardy and Ramanujan \cite{hardy1918asymptotic}, Rademacher \cite{rademacher1938partition}, and Andrews \cite{andrews1998theory, andrews2018integer}. 
In 2018 and 2021, 
 Bloom and Saracino \cite{zbMATH06870132} and Bloom and McNew \cite{zbMATH07369446} studied column and row deletion of partitions in the context of rook polynomials, rook equivalence, and Wilf equivalence. 
In this article, we will study Column-Row Impartial Merge (\textsc{CRIM}), another combinatorial game on partitions, in which row and column deletion play an essential role.

\subsection{Related work} 
The study of games played on partitions bridges combinatorial game theory with the elegant theory of partitions, a cornerstone of number theory and enumerative combinatorics. 
The first such game that we know of was studied by Sato \cite{sato1970maya}, who showed that Welter's game can be formulated in this way and conjectured that its Sprague-Grundy values relate to the representation theory of the symmetric group. 
In 2018, Irie \cite{irie2018p} confirmed Sato's conjecture. 
Subsequent work by Abuku and Tada \cite{abuku2023multiple} and Motegi \cite{motegi2021gamepositionsmultiplehook} extended these results. 

Since then, a number of researchers have explored other combinatorial games on partitions: \textsc{LCTR} by several authors \cite{Ilic2019,Gottlieb2024LCTR}, \textsc{CRIM} by Ba\v si\' c \cite{basic2022some,Basic_2023}, \nim{} on partitions and hyperrectangles \cite{gottlieb2025nimintegerpartitionshyperrectangles}, and a suite of chess-inspired impartial games, collectively called \textsc{Impartial Chess}, by Berlekamp \cite{impchess} and others \cite{gottlieb2025impartialchessintegerpartitions}. 
In her honors thesis, Meit \cite{hannah2025thesis} studied \textsc{CRPM} and \textsc{CRPS}, two partizan combinatorial games on partitions.

Conway~\cite{Con76} introduced the notion of a tame game.  Gurvich and Ho~\cite{GURVICH201854} extended this idea to a more general classification of games. The Conway-Gurvich-Ho classification of the games \textsc{Nim}, \textsc{Subtraction}, \textsc{Wythoff}, \textsc{Mark}, \textsc{LCTR}, \textsc{PNim}, and \textsc{Downright} are known; see \cite{Con76,Gottlieb2024LCTR,gottlieb2025nimintegerpartitionshyperrectangles,GURVICH201854} and \cref{fig:cgh}.

\subsection{Our results}

This paper extends Bašić's work \cite{Basic_2023} on
\textsc{Column-Row Impartial Merge} $(\crim)$, a combinatorial impartial game played on Young diagrams where a move consists of removing an arbitrary row or column. If the removed row or column is neither the first nor the last, the remaining pieces of the Young diagram are reattached to each other, as illustrated in \cref{fig:cr-moves}.

We classify the $\p/\n$-status of partititions with two or three parts in \cref{thm:2parts,thm:3parts}. 
We introduce a new family of partitions, which we call \emph{hook-squares}, and classify them by $\p/\n$-status in \cref{prop:hooksquare}. 

Let $\lambda = [\lambda_1, \cdots, \lambda_r]$ be a partition and let $\lambda' = [\lambda_1', \cdots, \lambda_{\lambda_1}']$ be its conjugate. 
We say that $\lambda$ is an \emph{$\OAE$-partition} if $\lambda_i$ and $\lambda_i'$ are even whenever $i$ is odd. 
Similarly $\lambda$ is an \emph{$\EAO$-partition} if $\lambda_i$ and $\lambda_i'$ are odd whenever $i$ is even; see \cref{fig:even-odds}. 
\begin{figure}[h]
    \centering
\tikzset{every picture/.style={line width=0.75pt}} %set default line width to 0.75pt      
\includegraphics[scale=0.7]{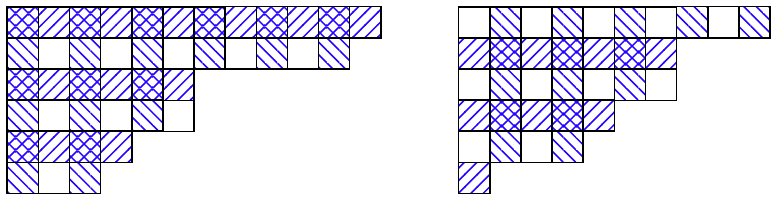}
    \caption{Example of an $\OAE$-partition (left) and an $\EAO$-partition (right). }
    \label{fig:even-odds}
\end{figure}

\noindent
The following theorem can be used to establish the $\p/\n$-status of a variety of families of partitions, including even staircases, partially resolving Conjecture 5.1 of Bašić \cite{Basic_2023}.

\begin{restatable}{theorem}{thmoddsareeven} \label{thm:oddsareeven}
$\OAE$-partitions are $\p$-positions.
\end{restatable}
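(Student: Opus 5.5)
We prove that the set of $\OAE$-partitions has the two properties of a $\p$-set. First, the terminal (empty) position is vacuously an $\OAE$-partition. Second, from every $\OAE$-partition, every move has a reply that lands in an $\OAE$-partition. Since \crim{} is finite, the player who always restores the $\OAE$ property makes the last move. So every $\OAE$-partition is a $\p$-position.

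Pairing strategy. Let $\lambda=[\lambda_1,\dots,\lambda_r]$ be a nonempty $\OAE$-partition. Then $\lambda'_1=r$ is even, so the rows split into consecutive pairs $(2k-1,2k)$. By symmetry, $\lambda_1$ is even, so the columns split into pairs $(2k-1,2k)$ as well. If the opponent deletes row $i$, we reply by deleting its partner row ($i+1$ if $i$ is odd, $i-1$ if $i$ is even). If the opponent deletes a column, we delete its partner column. The definition of $\OAE$ is symmetric under conjugation, and deleting column $j$ of $\lambda$ is the same as deleting part $j$ of $\lambda'$. Hence it suffices to treat the row case.

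Deleting a row simply removes that part, and the rest re-forms a partition. So the net effect of the two moves is to delete rows $2k-1$ and $2k$. Every row below shifts up by exactly two, which preserves the parity of its index. Thus every surviving row in an odd position had an odd index before, and therefore has even length.

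The column condition is the only real check. For odd $j$, the new column length is
\[
\lambda'_j - [\lambda_{2k-1}\ge j] - [\lambda_{2k}\ge j].
\]
This is even unless exactly one indicator equals $1$. Since $\lambda_{2k}\le\lambda_{2k-1}$, that would mean $\lambda_{2k}<j\le\lambda_{2k-1}$. In that case the rows of length at least $j$ are exactly rows $1,\dots,2k-1$, so $\lambda'_j=2k-1$ is odd. This contradicts the $\OAE$ hypothesis, since $j$ is odd. Hence all odd-indexed columns keep even length; columns that become empty are harmless. The result is again an $\OAE$-partition, which completes the induction.

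The main point requiring care is the column-parity computation above. It works only because the $\OAE$ condition constrains rows and columns simultaneously: the even column length $\lambda'_j$ rules out the one bad configuration between a paired row and its partner.
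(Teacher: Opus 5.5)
Your proof is correct and takes essentially the same route as the paper's. Both answer each move with the partner-row (parallel) response, reduce the column case to the row case by conjugation, and observe that an odd-indexed column $j$ can lose exactly one cell only if $\lambda_{2k}<j\le\lambda_{2k-1}$, which would force $\lambda'_j=2k-1$ to be odd.
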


\noindent   
The following characterization of the $\p$-positions among $\EAO$-partitions also partially resolves Conjecture 5.1 of Bašić \cite{Basic_2023}. 

\begin{restatable}{theorem}{corevensareodd} \label{cor:evensareodd}
Let $\lambda$ be an $\EAO$-partition with $\lambda_1 > 1$ columns and $r > 1$ rows. Then $\lambda$ is a $\p$-position if and only if it is of even rank. 
\end{restatable}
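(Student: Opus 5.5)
The plan is a strong induction on $|\lambda|$. I read the rank as Dyson's rank $\lambda_1-r$; this is consistent with \cref{thm:oddsareeven}, since an $\OAE$-partition has $\lambda_1=\lambda'_2\ldots$ wait, more simply has $\lambda_1$ and $\lambda'_1=r$ both even. The claim to prove is: an $\EAO$-partition with at least two rows and two columns is a $\p$-position exactly when $\lambda_1\equiv r \pmod 2$. The degenerate partitions, with a single row or column, are handled by the earlier results on one-part partitions and rectangles. Those results also serve as base cases whenever a move drops us below two rows or columns.

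\emph{Structural lemma.} First I would record how deleting a row interacts with the $\EAO$ conditions; deleting a column is the conjugate statement. Suppose $r$ is even. Then row $r$ is odd, and no even-indexed column can have height exactly $r$, because even-indexed columns have odd height. So deleting the last row only shortens odd-indexed columns, and the result is again $\EAO$ with rank changed by one. More generally, deleting row $i$ shifts the parity of the indices of all later rows. Deleting two consecutive rows $i,i+1$ restores the index parity of everything below. One must also check the effect on column heights: each column loses $0$, $1$ or $2$ cells. The cells in even-indexed columns should lose either $0$ or $2$. This should follow because rows $i$ and $i+1$ have lengths of matching parity structure: an even-indexed row is odd, so row $i+1$ cannot end at an odd column boundary in a bad way. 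This bookkeeping is routine but must be done carefully, including the reattachment rule for interior rows.

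\emph{Odd rank implies $\n$.} If $\lambda_1-r$ is odd, exactly one of $\lambda_1,r$ is even. If $r$ is even, I delete the last row; if $\lambda_1$ is even, I delete the last column. By the lemma the result is $\EAO$ of even rank. Provided it still has at least two rows and columns, it is $\p$ by induction; otherwise the base cases apply.

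\emph{Even rank implies $\p$.} Here every option must be shown to be $\n$, and I plan a pairing (mirror) strategy. After the opponent deletes row $i$ (respectively column $j$), I respond by deleting the adjacent row $i\pm1$, or the adjacent column, chosen so that the pair deleted is $\{2k-1,2k\}$ or $\{2k,2k+1\}$ as the lemma requires. By the lemma, the result is an $\EAO$-partition whose rank differs from that of $\lambda$ by $\pm2$, hence is still even. By induction it is $\p$, so the opponent's option has a $\p$-option and is therefore $\n$.

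\emph{Main obstacle.} The hardest part is the boundary cases of this pairing. These arise when the deleted line is the first or last one and no suitable partner exists, or when the response would leave fewer than two rows or columns. They also arise when an interior deletion causes columns of different heights to merge. In these cases I would first try using the last-row or last-column move from the odd-rank step instead of a partner move. Failing that, I would exhibit a move to an $\OAE$-partition and invoke \cref{thm:oddsareeven}.
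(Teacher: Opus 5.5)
Your skeleton matches the paper's: paired responses for even rank, and deleting a line of length one for odd rank. However, the even-rank direction has a genuine gap, and it sits exactly in what you defer to ``the lemma'' and to ``the main obstacle''.

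\emph{The pairing lemma is false for half the pairs.} Deleting rows $i,i+1$ removes exactly one cell from column $j$ precisely when $\lambda'_j=i$. If $i=2k$, that height is even, so $j$ must be odd-indexed. Hence even-indexed columns lose $0$ or $2$ cells, and the $\EAO$ conditions survive; no column vanishes, since row $1$ stays. If $i=2k-1$, the height is odd, so column $j$ may be even-indexed and then becomes even. For example, $[5,3,3,1,1]$ is $\EAO$ of rank $0$, but deleting rows $3,4$ gives $[5,3,1]$, whose second column has height $2$ and which lies in $\n$ by \cref{thm:3parts}. So only the pairs $\{2k,2k+1\}$ may be used: answer the deletion of row $i$ by deleting old row $i+1$ if $i$ is even, and old row $i-1$ if $i\ge 3$ is odd.

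\emph{The first row has no partner.} The pair $\{1,2\}$ genuinely fails: $[4,3,3,1^3]$ is $\EAO$ of rank $2$, yet deleting rows $1,2$ leaves $[3,1^3]\in\n$ by \cref{lem:hook}. The missing idea is to answer the deletion of row $1$ by deleting column $1$. The result $[\lambda_2-1,\ldots,\lambda_r-1]$ has $i$th row $\lambda_{i+1}-1$ and $j$th column $\lambda'_{j+1}-1$. Hence all its odd-indexed rows and columns are even, so it is an $\OAE$-partition and lies in $\p$ by \cref{thm:oddsareeven}. This is the essential use of \cref{thm:oddsareeven}. Listing ``exhibit a move to an $\OAE$-partition'' as a last resort, without identifying this move, leaves the case unproved.

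\emph{Small cases.} ``The base cases apply'' is not enough here, because your prescribed move can land on a one-row $\n$-position.
In $[3,1,1]$ (rank $0$), answering the deletion of row $2$ by deleting row $3$ gives $[3]\in\n$.
In $[3,1]$ (rank $1$), your last-row move also gives $[3]\in\n$; the winning move there is to delete column $2$ or $3$.
So partitions with $r\le 3$ or $\lambda_1\le 3$ must be settled directly, e.g.\ by \cref{thm:2parts,thm:3parts} and \cref{lem:hook}, as the paper does for $r\in\{2,3\}$.

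\emph{Last row with $r$ even.} Suppose $r$ is even and the opponent deletes the last row. First note that $\lambda_r=1$. The last column, of even index $\lambda_1$, then has height $1$, since height at least $2$ would force $\lambda_2=\lambda_1$ to be even. Deleting that column yields an $\EAO$-partition of even rank, so your odd-rank fallback does work there, but this argument has to be given.
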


\noindent
We introduce a family of partitions which we refer to as \emph{rectairs} and the subfamily of \emph{balanced} rectairs, as well as an operation on partitions which we call \emph{meld} (see \cref{fig:rectairs}). 

\begin{figure}[h]
\centering
\includegraphics[scale=0.9]{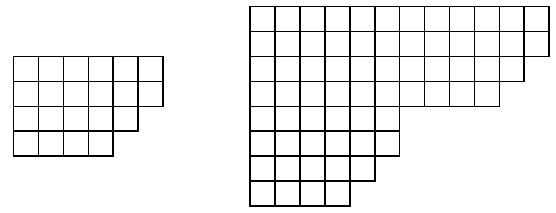}
\caption{The rectair $R^2_{4,6}$ and the meld $R^2_{4,6} \diamond R^2_{4,6}$.}
\label{fig:rectairs}
\end{figure}

\noindent
The meld operation is well-behaved with respect to $\p$-ness on balanced rectairs.

\begin{restatable}{theorem}{thmrectairs}
\label{thm:rectairs}
    The meld of any number of balanced rectairs is losing. 
\end{restatable}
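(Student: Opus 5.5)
The plan is to reduce the statement to \cref{thm:oddsareeven}: I will show that every balanced rectair is an $\OAE$-partition, and that the meld of $\OAE$-partitions of the appropriate shape is again an $\OAE$-partition. Losingness then follows at once, since $\OAE$-partitions are $\p$-positions. The definitions of rectair, balanced and meld are not yet fixed at this point in the paper, so the key steps below are written for the intended reading: a rectair $R^k_{a,b}$ is a rectangle with staircase-like steps of size $k$ attached, and the meld $\lambda\diamond\mu$ attaches $\mu$ at the outer corner of $\lambda$, as in \cref{fig:rectairs}.

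\textbf{Step 1: balanced rectairs are $\OAE$.} Write out the row lengths $\lambda_i$ and column lengths $\lambda'_j$ of a rectair $R^k_{a,b}$ explicitly. They form blocks of equal values, with block lengths and value increments governed by $k$, $a$ and $b$. I expect ``balanced'' to be exactly the parity condition on $k,a,b$ ensuring that every row length and column length at an odd index is even. If the definition is slightly weaker than this, I will check which of $\lambda_i,\lambda'_i$ at odd $i$ could fail, and handle it by a direct parity count on the block structure.

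\textbf{Step 2: meld preserves $\OAE$ on balanced pieces.} In $\lambda\diamond\mu$, the rows of $\mu$ appear after the $r(\lambda)$ rows of $\lambda$, with lengths increased by a constant offset coming from $\lambda$. Similarly, the columns of $\lambda$ appear after the columns of $\mu$, with an offset. I will verify three facts for balanced rectairs.
\begin{itemize}
\item The number of rows and the number of columns are even, so the parity of indices is preserved under the shift.
\item The offsets are even, so the parity of the lengths is preserved.
\item The rows and columns of the first piece keep their original lengths, or change by an even amount.
\end{itemize}
Together these show that if $\lambda$ and $\mu$ are balanced rectairs, or more generally $\OAE$-partitions with an even number of rows and columns, then $\lambda\diamond\mu$ is $\OAE$ and again has an even number of rows and columns.

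\textbf{Step 3: finish by induction.} Induct on the number of rectairs in the meld, using the invariant ``$\OAE$ with an even number of rows and columns.'' This handles melds of any number of balanced rectairs, and \cref{thm:oddsareeven} then finishes the proof.

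The main obstacle is Step 2: pinning down exactly how the meld shifts indices and lengths, and confirming that balancedness supplies the evenness of the row count, the column count and the offsets. If the parities do not align, for example if balanced rectairs may have an odd number of rows, the reduction to \cref{thm:oddsareeven} fails. In that case I would switch to a direct argument: show that no move takes a meld of balanced rectairs to another such meld, and that every move from a non-member can be answered by a move back into the family. For a move inside one rectair component, the natural reply is the mirrored row/column move in the same component, which restores balance and reattaches correctly.
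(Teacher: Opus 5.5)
Your main route fails at Step~1: balanced rectairs are not $\OAE$ in general.

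Balancedness only requires $r\equiv c \pmod 2$, $r,c\ge 2$, and $k<\min(r,c)-1$ when $r,c$ are odd. So it admits $R_{3,3}=[3,3,3]$ and $R^1_{3,3}=[3,3,2]$ (odd first row), as well as $R^2_{4,4}=[4,4,3,2]$ (odd third row). In fact $R^k_{r,c}$ is $\OAE$ only when $r,c$ are even and $k$ is $0$ or odd. Odd pieces also have an odd number of rows and odd width, so the index and offset parities in Step~2 fail too. The family is genuinely not contained in $\OAE$: $R_{3,3}\diamond R_{3,3}=[6^3,3^3]$ is losing but not $\OAE$. No parity count can rescue the reduction to \cref{thm:oddsareeven}. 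Your Steps~2--3 are correct, because an $\OAE$ partition automatically has an even number of rows and an even first part, but they prove only the special case of melds of $\OAE$ rectairs.

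The real proof is your fallback, and the paper argues exactly that way, by induction on size. Write $\lambda=R^{k_1}_{r_1,c_1}\diamond\cdots\diamond R^{k_m}_{r_m,c_m}$ and show that every move admits a reply back into the family. That is all you need; statements about non-members are neither needed nor true (e.g.\ $[4,2,2,2]\in\p$ is not such a meld). However, you give none of the content, and no uniform ``mirrored'' reply works:
\begin{itemize}
\item the parallel response loses on odd pieces: $[3,3,3]\to[3,3]\to[3]\in\n$;
\item the perpendicular response loses on thin or staircase-like pieces: $[4,4]\to[4]\to[3]\in\n$, and on $S_4$, deleting row~$2$ and then column~$3$ leaves $S_3\in\n$.
\end{itemize}

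The missing idea is the case split.
\begin{itemize}
\item Reduce to a row move using $(\mu\diamond\lambda)'=\lambda'\diamond\mu'$ and $(R^k_{r,c})'=R^k_{c,r}$, and localize it to one component $\mu=R^{k_p}_{r_p,c_p}$.
\item Reply with the parallel response taken inside $\mu$ when $\min(r_p,c_p)=2$, or when $\mu$ is staircase-like, i.e.\ $k_p=\min(r_p,c_p)-1$ (balancedness then forces $r_p,c_p$ even).
\item Otherwise reply perpendicularly, which yields the balanced rectair $R^{\max(k_p-1,0)}_{r_p-1,c_p-1}$.
\end{itemize}

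Your phrase ``reattaches correctly'' then hides two facts that must be verified.
\begin{itemize}
\item Deleting two interior rows of a staircase-like piece splits it into a meld of two balanced rectairs, e.g.\ $R^{r_p-\ell-2}_{r_p-\ell-1,c_p-\ell-1}\diamond R^{\ell-2}_{\ell-1,\ell+1}$ when $k_p=r_p-1$.
\item When the first row of $\mu$ shrinks by $t$, the component $R^{k_{p+1}}_{r_{p+1},c_{p+1}}$ sitting on top of it must be rewritten as $R^{k_{p+1}}_{r_{p+1},c_{p+1}+t}$. This stays balanced only because $t\in\{2,c_p\}$ is even.
\end{itemize}
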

\noindent From this, a number of other results follow, including (a second proof) that even staircases are losing %(\cref{proppnoffamilies})
and that the meld of even rectangles is losing.

We classify \crim played on several families of partitions according to the Conway-Gurvich-Ho hierarchy. %.

\begin{restatable}{theorem}{thmCGHmain}\label{thm:CGHmain}
    $\crim$  is neither returnable nor domestic. 
\end{restatable}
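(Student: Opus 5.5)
The plan is to refute each property with an explicit small position. The work is in tabulating, for small partitions, both the normal-play Grundy value $\sg(\lambda)$ and the misère Grundy value $\sg^-(\lambda)$, and then checking them against the Conway--Gurvich--Ho definitions of returnable and domestic. Both properties are universal statements over positions and moves: returnability constrains the pairs $(\sg,\sg^-)$ of a position relative to those of its options, and domesticity restricts which pairs may occur and how they relate along moves. So a single offending position (together with its options, where the definition involves them) suffices in each case.

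I would compute bottom-up, recording all options of each partition after the merge. The empty partition is terminal, so $(\sg,\sg^-)(\emptyset)=(0,1)$ under the convention that the misère terminal position is a win for the player to move. The small cases then come out as follows.
\begin{itemize}
\item $[1]$ has only the option $\emptyset$, so $(\sg,\sg^-)=(1,0)$.
\item $[2]$ and $[1,1]$ both have options $\{[1],\emptyset\}$, so $(\sg,\sg^-)=(2,2)$.
\item $[2,1]$ has options $[1],[2],[1,1]$, so $(\sg,\sg^-)=(0,1)$.
\item $[2,2]$ has options $[2],[1,1]$ only, so $(\sg,\sg^-)=(0,0)$.
\end{itemize}
The position $[2,2]$ is already non-tame, since it is a normal-play $\p$-position that is also a misère $\p$-position. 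I expect it, or a slightly larger relative such as $[3,3]$, $[2,2,2]$, $[3,2,1]$ or a hook-square, to be the witness against domesticity. Next I would continue the table through partitions of size at most about $6$ or $7$, using conjugation symmetry to halve the work.

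For each of the two properties, I would then isolate the exact forbidden local pattern in its definition and search the table for it. Specifically, I would look for a position whose pair $(\sg,\sg^-)$ is of a disallowed type, or a position of one allowed type with no move to a position of the required companion type. For non-returnability, my first candidates are positions with pair $(0,0)$, such as $[2,2]$, or larger positions with $\sg=\sg^-\geq 2$ that fail to have a move returning to the required class. I would check these against every option, which is a finite verification once the table is in hand.

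The main obstacle is bookkeeping rather than ideas. The Gurvich--Ho definitions of returnable and domestic are delicate, involving the relation between a position's pair and its options' pairs. The smallest witness may therefore need partitions larger than those listed above, and each computation must handle the merge step correctly: deleting an interior row or column reattaches the remaining pieces, so options that look distinct can coincide. If hand computation becomes unwieldy, I would note that the witnesses can be certified by listing, for the one or two chosen partitions, all options with their $(\sg,\sg^-)$ pairs. This lets the reader verify the theorem directly from a short table.
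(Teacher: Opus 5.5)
\textbf{There is a genuine gap: this is a search plan, not a proof.} The theorem amounts to exhibiting two explicit counterexamples, and you produce neither. The criteria you would search with are also misread from \cref{D.DTP}, so the search as described would not find them.
\begin{itemize}
\item Domesticity is purely a condition on which pairs occur: no $(0,k)$- or $(k,0)$-positions with $k\ge 2$. It says nothing about moves.
\item Your claim that $[2,2]$ is non-tame is false. Tame games allow $(k,k)$-positions for every $k\ge 0$, so $(0,0)$ is permitted, and it is irrelevant to domesticity anyway.
\item Your other domesticity candidates fail as well: $[3,3]$ and $[2,2,2]$ have pair $(2,1)$, and $[3,2,1]$ has pair $(1,0)$.
\item Returnability is a two-move condition. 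A violation needs a swap position $x$ (pair $(0,1)$ or $(1,0)$) and a move from $x$ to a non-terminal $y$ such that no option of $y$ has the same pair as $x$. A $(0,0)$- or $(k,k)$-position such as $[2,2]$ cannot be the starting position, so your first candidates for non-returnability are the wrong kind of object.
\end{itemize}

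\textbf{The missing witnesses.}
\begin{itemize}
\item Against domesticity: $[4,3]$ has options $[3],[4],[3,2],[3,3]$, with pairs $(1,0),(2,2),(3,3),(2,1)$. Hence $\cp([4,3])=(0,4)$. This lies within your size-$7$ range, so a correctly targeted search would find it.
\item Against returnability: the paper takes $x=[3,2^3,1]$. Its options $[2^3,1]$, $[3,2^2,1]$, $[3,2^3]$, $[2,1^3]$, $[2,1^4]$, $[2^4,1]$ have pairs $(0,4)$, $(2,5)$, $(3,3)$, $(0,1)$, $(3,3)$, $(3,1)$, so $x$ is a $(1,0)$-position. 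Consider the move to $y=[3,2^3]$. The only options of $y$ are $[2^4],[3,2^2],[2,1^3],[2^3]$, with pairs $(0,0),(1,2),(0,1),(2,1)$, and none of these is $(1,0)$.
\end{itemize}

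This second witness has size $10$. Every swap position of size at most $7$ is a row, a column, a hook, or $[3,2,1]$, and all of these satisfy the returnability condition. So a table up to size $6$ or $7$, as you planned, contains no witness to non-returnability, and your bound would have had to be enlarged.
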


This paper is structured as follows. 
In \cref{sec:prelims}, we provide background information on combinatorial game theory, Sprague-Grundy theory, partitions, and $\crim$. 
In \cref{sec:CR}, we describe our results on $\crim$. 
In \cref{sec:CG}, we prove \cref{thm:CGHmain}, which describes the placement of \crim in the Conway-Gurvich-Ho (CGH) classification. 
We also determine the CGH classification of variants of CRIM played on restricted set of partitions, like hooks (\cref{thm:hooks}) and rectangles (\cref{thm:rects}). 
We conclude with \cref{sec:future}, where we offer some directions for future research. 

\section{Preliminaries} \label{sec:prelims}

In this section, we recall definitions and describe notations used throughout the paper. In \cref{subsec:pttns} we revisit the definitions of partitions and Young diagrams, as well as some related notions. In \cref{subsec:families} we describe some families of partitions that will be relevant to us. In \cref{subsec:game-theory} we review some basics of combinatorial game theory. 

\subsection{Partitions} \label{subsec:pttns}
Let $n$ be a non-negative integer. A way to write $n$ as a sum $\lambda_1 + \cdots + \lambda_r$ of positive integers with $\lambda_1 \geq \cdots \geq \lambda_r$ is called an \emph{(integer) partition}. The summands are called \emph{parts}. The set of partitions form a poset, known as Young's lattice \cite{Stanley}, which we denote by $\y$. 

We represent partitions graphically using Young diagrams\footnote{Young diagrams are sometimes refered to as Ferrers boards; see e.g. \cite{zbMATH06870132,zbMATH07369446}.} (see, e.g. \cref{fig:conjugation}).
If $\lambda = \lambda_1 + \cdots + \lambda_r$ is a partition we use tuple notation and write $\lambda = [\lambda_1, \ldots, \lambda_r]$. 
The \emph{(Dyson's) rank} of $\lambda$ is $|r - \lambda_1|$. 
The only partition of $0$ is the empty sum, denoted by $[ \, ]$. 
The \emph{conjugate} $\lambda'$ of $\lambda$ is a partition $\lambda' = [\lambda_1', \ldots, \lambda'_{\lambda_1}]$ where $\lambda'_k$ is the largest integer such that $\lambda_{\lambda'_k} \geq k$. 
Sometimes it's convenient to use exponential notation. 
For example, we write $[5^2, 4, 1^3]$ in place of $[5,5,4,1,1,1]$.

% \begin{figure}[h!]
%     \centering
%     \tikzset{every picture/.style={line width=0.75pt}} 
%     \includegraphics{figures/hookExample.pdf}
%     \caption{The hooks $\Gamma_{4,4}$, $\Gamma_{5,3}$ and $\Gamma_{6,2}$.}
%     \label{fig:hook-example}
% \end{figure}

Let $\mu = [\mu_1, \dots, \mu_k]$ and $\lambda = [\lambda_1, \dots, \lambda_m]$ be partitions. 
Define the \emph{meld} of $\mu$ and $\lambda$ to be the partition $\mu \diamond \lambda = [\mu_1 + \lambda_1, \dots, \mu_1 + \lambda_m, \mu_1, \mu_2, \dots, \mu_k].$ 
Observe that $\diamond$ is associative and that $[ \ ] \diamond \lambda = \lambda \diamond [ \ ] = \lambda$ for every partition $\lambda$. Thus, the set of partitions forms a (noncommutative) monoid under $\diamond$. 
% Every partition can be represented uniquely as a meld of rectangles. We refer to the rectangles in this meld as \emph{terminal rectangles}. 
Conjugation interacts nicely with meld: $(\mu \diamond \lambda)' = \lambda' \diamond \mu'$ (also see \cref{fig:conjugation}).

\begin{figure}[h!]
    \centering
   \includegraphics[scale=0.8]{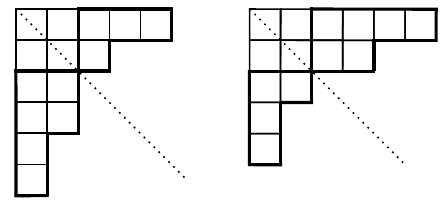}
    \caption{A Young diagram $\lambda = [5,3,2^2,1^2] = [2^2, 1^2] \diamond [3,1]$ and its conjugate $\lambda' = [6,4,2,1^2] = [2,1^2] \diamond [4,2]$.}
    \label{fig:conjugation}
\end{figure}

\newpage
\subsection{Families of Partitions} \label{subsec:families} 

Later in the paper, we obtain results for several families of partitions. We now define these families for future reference.  Let $r, c \geq 1$. See \cref{fig:all-partitions}. 
    \begin{itemize}
        \item \label{def:rowcol}
     A \emph{row} is a partition of the form $[c]$. A \emph{column} is a partition of the form $[1^r]$. \label{def:hook}
   \item A \emph{hook} is a partition of the form $[c, 1^{r-1}]$, which we denote by $\Gamma_{r, c}$. 
     A \emph{thick hook}, denoted $\Gamma_{r,c}^{a, b}$, is a partition of the form $[c^a,b^{r-a}]$, where 
     $r > a \geq 2$ and $c > b \geq 2$. 

\item \label{def:rectangle}
   A \emph{rectangle} $R_{r,c}$ is a partition of the form $[c^r]$.
    We include the empty partition $R_{0,0}=[\ ]$, and call it the \emph{trivial} rectangle.
 
    \item 
     \label{def:quadrated}
      A \emph{quadrated} partition is one in which each part is even and appears an even number of times, i.e., one of the form $[\lambda_1^{m_1}, \ldots, \lambda_t^{m_t}]$, where each $\lambda_j$ and $m_j$ is even. 
  % For $n \geq 1$, a \emph{staircase}, denoted $S_n$, is a partition of the form $[n, n-1, \ldots, 1]$. For $n \geq 2$, a \emph{padded staircase}, $PS_n$ is a partition of the form $[n, n, n-1, \dots, 1]$. For $n \geq 2$, a \emph{doubly padded staircase}, $DPS_n$ is a partition of the form $[n+1, n+1, n, \ldots, 2]$. 
  \item \label{def:staircase}
  For $n \geq 1$, a \emph{staircase}, denoted $\s_n$, is a partition of the form $[n, n-1, \ldots, 1]$. For $n \geq 2$, a \emph{padded staircase}, $\ps_n$ is a partition of the form $[n, n, n-1, \dots, 1]$. For $n \geq 2$, a \emph{doubly padded staircase}, $\dps_n$ is a partition of the form $[n+1, n+1, n, \ldots, 2]$. See \cref{fig:all-partitions}.   
  \item Let $\lambda = [\lambda_1, \cdots, \lambda_r]$ and let $\lambda' = [\lambda_1', \cdots, \lambda_{r'}']$ be its conjugate. We say that $\lambda$ is an \emph{$\OAE$-partition} if $\lambda_i$ and $\lambda_i'$ are even whenever $i$ is odd. Similarly $\lambda$ is an \emph{$\EAO$-partition} if $\lambda_i$ and $\lambda_i'$ are odd whenever $i$ is even. We denote the set of $\OAE$-partitions by $\OAE$ and the set of $\EAO$-partitions by $\EAO$; see \cref{fig:even-odds}.
  \item Let $r, c$ be positive integers and let $0 \leq k < \min(r, c)$. A \emph{rectair} is a partition that is either $[ \ ]$ or of the form $R^k_{r, c} = [c^{r-k}, c-1, \ldots, c-k]$.

    \end{itemize}
    
    Observe that rectairs are a common generalization of rectangles, stairs, padded stairs, and doubly padded stairs. Indeed, for 
suitable values of $r, c, k$, we have \[R^0_{r, c} = R_{r, c}, \quad R^{k-1}_{k, k} = \s_k, \quad R^{k-1}_{k+1, k} = \ps_{k}, \quad \mbox{and} \quad R^{k-2}_{k, k} = \dps_{k}.
\] 

\begin{figure}[h!]
    \centering
    \includegraphics[scale=.97]{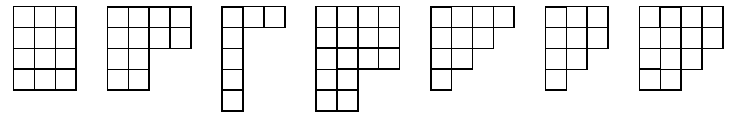}
    \caption{
    The rectangle $R_{4,3}$, 
    the quadrated partition $[4^2, 2^2]$, 
    the hook $\Gamma_{5,3}$, 
    the  thick hook $\Gamma_{5,4}^{3,2}$, 
    the staircase $S_4$, 
    the padded staircase $\ps_3$ and 
    the doubly padded staircase $\dps_3$. 
    We note that $\Gamma_{5,3}$ and $S_4$ are EAO partitions; that $[4^2, 2^2]$ is OAE; and that $S_4,\ps_3,$ and $\dps_3$ are rectairs.}
    \label{fig:all-partitions}
\end{figure}

% \begin{definition}
%     A partition $\lambda = [\lambda_1, \ldots, \lambda_r]$ is said to be \emph{stable} if $\lambda_1 > 1$ and $r>1$ and $\lambda_1$ and $r$ have the same parity. 
% \end{definition}

\subsection{Combinatorial games and their classification \label{subsec:game-theory}}
% we decided not to comment on distinguishing between position and game
In this paper we consider finite (short) impartial games, that is, players have the same moves available to them throughout a game, and after finitely many steps the games concludes with a uniquely defined winner. 
Throughout the paper all positions in the games are partitions.

A \emph{winning position}, also known as an $\n$-position, is a position in which the next player can guarantee a win. 
Conversely, a \emph{losing position}, equivalently a $\p$-position, is a position from which the previous player can guarantee a win. Whenever a move leads to a $\p$ position, we also say that such a move \emph{wins.}
%Notice that this implies that there is \emph{at least one} losing position which can be reached from an $\mathcal{N}$-position while \emph{every} position reachable from a $\p$ position has to be a winning one.
We call a position \emph{terminal} if no further moves can be made from it.

There are two ways to play a combinatorial game, depending on the winning condition. In \emph{normal play}, a player who reaches a terminal position loses. In \emph{misère play}, a player who reaches a terminal position wins. 

%Alternatively, the terminal position under normal play is always a $\mathcal{P}$-position.
%For simplicity, unless otherwise specified, all games outlined in further text are impartial and normal play. A \emph{solved} game is a game whose outcome can be efficiently calculated given any one of its positions.

Given an impartial game $G$, let  $\lambda$ be one of its positions.
For normal-play impartial games, Sprague \cite{Spr35} and Grundy \cite{Gru39} independently found a way to generalize winning and losing positions, assigning to each position in an impartial combinatorial game a nonnegative integer value, called the \emph{Sprague-Grundy value} of the position. We will denote the Sprague-Grundy value of a position $\lambda$ by $\sg_G(\lambda)$
and the related \emph{minimal excluded value} of a finite set of nonnegative integers $S$ by $\mathrm{mex}(S)$. For an in-depth treatment of the concepts mentioned in this subsection we refer the reader to \cite{Sie13}.

% We use standard notation (see \cite{Con76}\todo{add}).
The \emph{mis\`ere Grundy numbers} $\mathcal G_G^-$ are defined the same way as the ordinary Sprague-Grundy values, where the recursion end with terminal positions having the mis\`ere Grundy value equal to one.
Let $\sg_G(\lambda)$ and $\mathcal G_G^-(\lambda)$ be its Sprague-Grundy and mis\`ere Grundy values, respectively.
Then, we define  the \emph{Conway pair} of $\lambda$ to be $\cp_G(\lambda) = (\sg_G(\lambda), \mathcal G_G^-(\lambda))$.
We omit the subscript when $G$ is clear from context.

\begin{definition} \label{D.DTP}
Recall that a game is said to be \begin{enumerate} 
\item {\em returnable} if for any move from a $(0,1)$-position (resp.,~a $(1,0)$-position) to a non-terminal position $y$, there is a move from $y$ to a $(0,1)$-position (resp.,~to a $(1,0)$-position).
\item {\em forced} if each move from a $(0,1)$-position results in a $(1,0)$-position and vice versa;
\item {\em domestic} if it has neither $(0,k)$-positions nor $(k,0)$-positions with $k \geq 2$;
\item {\em tame} if it has only $(0,1)$-positions, $(1,0)$-positions, and $(k,k)$-positions with $k \geq 0$;
\item  {\em miserable} if for every position $x$, one of the following holds: (i) $x$ is a $(0, 1)$-position or $(1,0)$-position, or (ii) there is no move from $x$ to a $(0,1)$-position or $(1, 0)$-position, or (iii) there are moves from $x$ to both a $(0, 1)$-position and a $(1, 0)$-position.
\item {\em pet} if it has only $(0,1)$-positions, $(1,0)$-positions, and $(k,k)$-positions with $k \geq 2$.
 \end{enumerate}
\end{definition}
 If $\cp(\lambda) \in \{(0, 1), (1, 0)\}$ then $\lambda$ is called \emph{swap}. If $k \geq 0$ and $\cp(\lambda) = (k, k)$, then $\lambda$ is called \emph{symmetric}.

\subsection{Previously established results about \crim} \label{sec:SG}
In \cite{basic2022some,Basic_2023,Ilic2019,GKM22LCTR}, CRIM and the related game LCTR\footnote{Note that in the case of rectangular partitions, the games CRIM and LCTR coincide.} are studied and for several partition families the Grundy values are explicitly determined. 
%Some of the following results will be useful in subsequent sections. 

\begin{proposition}
\label{lem:bordercase_a2}
    Let $3\le \lambda_2\le \lambda_1$ be positive integers.
    Then 
    $$\sg \left([\lambda_1,\lambda_2]\right) = 
\begin{cases}
0 &  \text{if $\lambda_1$ is even; }\\
2 & \text{if $\lambda_1$ and $\lambda_2$ are odd;} \\
3 & \text{otherwise.}
\end{cases}$$
\end{proposition}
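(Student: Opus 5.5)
The plan is to argue by strong induction on $\lambda_1+\lambda_2$, computing the options of $[\lambda_1,\lambda_2]$ explicitly and applying $\mex$.

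\emph{Options of a two-part partition.} Write $a=\lambda_1\ge b=\lambda_2$. Removing a row or a column gives the following options:
\begin{itemize}
\item deleting the first row gives $[b]$;
\item deleting the second row gives $[a]$;
\item deleting one of the first $b$ columns gives $[a-1,b-1]$;
\item deleting one of the columns $b+1,\dots,a$ (only possible when $a>b$) gives $[a-1,b]$.
\end{itemize}

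\emph{Single rows.} The options of $[c]$ are $[\ ]$ and $[c-1]$. An immediate induction then gives
\[
\sg([c]) = \begin{cases} 1 & c \text{ odd},\\ 2 & c \text{ even},\end{cases} \qquad c\ge 1.
\]

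\emph{Induction step for $b\ge 4$.} Here both $[a-1,b-1]$ and $[a-1,b]$ are covered by the induction hypothesis.
\begin{itemize}
\item If $a$ is even, then $a-1$ is odd. Hence $[a-1,b-1]$ and $[a-1,b]$ have values in $\{2,3\}$, while $[a]$ and $[b]$ have values in $\{1,2\}$. No option has value $0$, so $\sg([a,b])=0$.
\item If $a$ is odd, the options $[a-1,\cdot]$ have value $0$, and $\sg([a])=1$.
  \begin{itemize}
  \item If $b$ is odd, then $\sg([b])=1$. The option values are $\{0,1\}$, so the $\mex$ is $2$.
  \item If $b$ is even, then $\sg([b])=2$. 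The option values are $\{0,1,2\}$, so the $\mex$ is $3$.
  \end{itemize}
\end{itemize}
In the case $a=b$ there is no $[a-1,b]$ option. This changes nothing, because $[a-1,b-1]$ alone already provides the required value ($0$ when $a$ is odd, and a value in $\{2,3\}$ when $a$ is even).

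\emph{Base case $b=3$.} This is the main obstacle. The option $[a-1,2]$ lies outside the range of the statement, so an auxiliary computation of $\sg([c,1])$ and $\sg([c,2])$ is needed first. Conjugation maps the moves of $\lambda$ bijectively onto the moves of $\lambda'$, so $\sg(\lambda)=\sg(\lambda')$. This settles small cases such as $\sg([1,1])=\sg([2])=2$ and $\sg([2,2])=0$.

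Then I would prove by induction on $c$ a small table, using the options $[1],[c],[c-1],[c-1,1]$ of $[c,1]$ and $[2],[c],[c-1,1],[c-1,2]$ of $[c,2]$. The facts actually needed are:
\begin{itemize}
\item $\sg([c,2])=0$ whenever $c$ is even;
\item $\sg([c,2])\ne 0$ whenever $c$ is odd.
\end{itemize}
The first requires that $[c-1,1]$ and $[c-1,2]$ never have value $0$ when $c$ is even. The second, via the option $[c-1,2]$, follows from the first. From the first I expect a pattern of period $2$ for $[c,1]$ (something like values $2$ and $3$ alternating beyond small $c$), which I would determine by hand for $c\le 4$ and then verify inductively.

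\emph{Completing the base case.} With these facts, the case $b=3$ runs exactly like the general step:
\begin{itemize}
\item If $a$ is odd, the option $[a-1,2]$ has value $0$, so $\sg([a,3])=\mex\{0,1\}=2$, provided that $[a-1,3]$ (when $a>3$, value $0$ by induction) introduces no $2$.
\item If $a$ is even, the option $[a-1,2]$ has nonzero value, so no option has value $0$ and $\sg([a,3])=0$.
\end{itemize}
Since $b=3$ is odd, the case "$a$ odd, $b$ even" does not arise at the base. This completes the induction.
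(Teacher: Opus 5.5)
The paper does not prove this proposition itself: it is recalled in the preliminaries as a previously established result from Ba\v{s}i\'c's earlier work on \crim, so there is no in-paper argument to compare against. Your strong induction with explicit $\mathrm{mex}$ computations is a correct, self-contained proof. The list of options of $[a,b]$ is right, $\sg([c])\in\{1,2\}$ according to parity, and both the step for $b\ge 4$ and the base case $b=3$ check out. In the base case with $a$ even you should also record that $[a-1,3]$ has value $2$ by the odd-$a$ subcase, so it too is nonzero; you only mention $[a-1,2]$.

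The one thing to fix is your tentative pattern for $[c,1]$: the values do not alternate between $2$ and $3$. In fact $\sg([1,1])=2$, $\sg([c,1])=0$ for even $c\ge 2$, and $\sg([c,1])=3$ for odd $c\ge 3$. Consequently $\sg([c,2])=0$ for even $c$ and $\sg([c,2])=3$ for odd $c\ge 3$. Your guessed pattern could not survive an inductive check, because an odd $[c,1]$ gets a nonzero value only through the $0$-option $[c-1,1]$.

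Fortunately the only fact you actually use is $\sg([d,1])\ne 0$ for odd $d$, and that is true. So the argument goes through once the table is computed correctly. These auxiliary facts also follow immediately from results stated in the paper: $[c,1]=\Gamma_{2,c}$ is covered by \cref{lem:hook}, and \cref{thm:2parts} says $[c,2]\in\p$ if and only if $c$ is even.

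Finally, the formula in the statement already holds for $\lambda_2=2$. You could therefore take $b=2$ as the base and run your general step uniformly for all $b\ge 3$, with no separate treatment of $b=3$.
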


\begin{proposition}
\label{lem:rectangle}
For positive integers $a$ and $b$, we have
$$\sg(R_{r,c}) = 
\begin{cases}
0 & \text{if } c > 1 \text{ and } r > 1 \text{ and } r+c \text{ is even; }\\
2 & \text{if } c \leq 2 \text{ or } r \leq 2 \text{ and } r+c \text{ is odd; } \\
1 & \text{otherwise.}
\end{cases}$$
\end{proposition}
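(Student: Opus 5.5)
The plan is to prove the formula by strong induction on $r+c$, using only the elementary structure of \crim on rectangles. The first step is to record the moves from $R_{r,c}$. Deleting any row of $[c^r]$ leaves $[c^{r-1}]=R_{r-1,c}$. Deleting any column leaves $R_{r,c-1}$. When $r=1$ (respectively $c=1$), the first (respectively second) of these moves instead produces the empty partition $[\ ]$, whose Grundy value is $0$. Hence
\[
\sg(R_{r,c})=\mex\{\sg(R_{r-1,c}),\ \sg(R_{r,c-1})\},
\]
with the convention that a degenerate rectangle with a zero side is $[\ ]$. Both the game and the claimed formula are invariant under conjugation, which swaps $r$ and $c$. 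So I will freely assume $r\le c$ where it helps. I read the middle case as ``($c\le 2$ or $r\le 2$) and $r+c$ odd''.

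Next I handle the degenerate families $r=1$ or $c=1$, i.e.\ a single row $[c]$, directly. We have $\sg([1])=\mex\{0\}=1$ and, for $c \ge 2$,
\[
\sg([c])=\mex\{\sg([c-1]),0\}.
\]
Induction gives value $1$ for $c$ odd and value $2$ for $c$ even. This matches the formula, since with $r=1$ the parity of $r+c$ is odd exactly when $c$ is even.

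Then I treat $r,c\ge 2$, checking each case against the inductive hypothesis applied to the two options.
\begin{itemize}
\item If $r+c$ is even, both options have odd sum. By induction their values lie in $\{1,2\}$, whichever of the last two cases applies. Hence the mex is $0$.
\item If $r+c$ is odd and $r,c\ge 3$, both options $R_{r-1,c}$ and $R_{r,c-1}$ have both sides at least $2$ and even sum. So both have value $0$, and the mex is $1$.
\item If $r+c$ is odd and, say, $r=2$ (so $c\ge 3$ is odd), the options are $R_{1,c}$ and $R_{2,c-1}$. The first has value $1$, because $c$ is odd. The second has value $0$, because it has sides $\ge 2$ and even sum. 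The mex is therefore $2$.
\end{itemize}
The symmetric case $c=2$ follows by conjugation.

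There is no real obstacle here. The only delicate points are bookkeeping ones: correctly identifying the boundary behaviour when a side equals $1$ (the move to $[\ ]$), and making sure the inductive hypothesis is applied to options in the right regime. For instance, $R_{1,c}$ is not covered by the ``$r,c\ge2$'' analysis and must come from the row computation. Ordering the proof as ``rows first, then $r,c\ge2$ by cases'' handles this cleanly.
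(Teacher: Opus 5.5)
Your proof is correct and takes essentially the paper's approach. The paper cites this proposition from earlier work, but its proof of Theorem~\ref{thm:cgpairrects} runs the same induction on $r+c$: it handles rows first, then uses the two-option recursion $\mex\{\sg(R_{r-1,c}),\sg(R_{r,c-1})\}$, and the first coordinates of those Conway pairs are exactly this formula. You read the middle case correctly as ``($c\le 2$ or $r\le 2$) and $r+c$ odd''. In the even-sum case you only claim that the two options have values in $\{1,2\}$, and this correctly covers the boundary $r=3$ or $c=3$, where an option such as $R_{2,c}$ has value $2$ rather than $1$.
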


\begin{proposition}
\label{lem:hook}
The Sprague-Grundy value of a hook partition $\Gamma_{r,c}$ is given by:
$$\sg(\Gamma_{r,c}) = 
\begin{cases}
(r+c) \bmod 2 + 1 & \text{if } \min(r,c) = 1; \\
0 & \text{if $r$ and $c$ have the same parity;} \\
3 & \text{otherwise.}
\end{cases}$$ 
\end{proposition}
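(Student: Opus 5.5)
We prove the formula by induction on $r+c$, after listing the moves from a hook. Throughout, the row $[c]=\Gamma_{1,c}$ and the column $[1^r]=\Gamma_{r,1}$ are themselves hooks. Conjugation swaps rows and columns of the Young diagram, so $\sg(\lambda')=\sg(\lambda)$. Since $\Gamma_{r,c}'=\Gamma_{c,r}$ and the claimed formula is symmetric in $r$ and $c$, we may freely use this symmetry.

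\emph{Base case $\min(r,c)=1$.} By symmetry it suffices to treat the column $[1^r]$. Deleting its unique column yields $[\ ]$, and deleting any of its rows yields $[1^{r-1}]$. Hence $\sg([1])=\mex\{0\}=1$. For $r\ge 2$ we get $\sg([1^r])=\mex\{0,\sg([1^{r-1}])\}$. By induction on $r$ this alternates $1,2,1,2,\dots$, giving $1$ for $r$ odd and $2$ for $r$ even. This is exactly $(r+1)\bmod 2+1$, as claimed.

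\emph{Moves from a genuine hook, $r,c\ge 2$.} The following are all the moves from $\Gamma_{r,c}$, up to the resulting shape:
\begin{itemize}
\item deleting the first row leaves the column $[1^{r-1}]$;
\item deleting any other row leaves $\Gamma_{r-1,c}$;
\item deleting the first column leaves the row $[c-1]$;
\item deleting any other column leaves $\Gamma_{r,c-1}$.
\end{itemize}
All of these have smaller size, so their values are known by induction.

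\emph{Case $r\equiv c \pmod 2$.} Both $\Gamma_{r-1,c}$ and $\Gamma_{r,c-1}$ have dimensions of opposite parity. If both of their dimensions are at least $2$, the induction hypothesis gives value $3$. Otherwise one dimension equals $1$, and the formula gives $(\text{odd})\bmod 2+1=2$. The row $[c-1]$ and the column $[1^{r-1}]$ are nonempty, so their values lie in $\{1,2\}$. No option has value $0$, hence $\sg(\Gamma_{r,c})=0$.

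\emph{Case $r\not\equiv c \pmod 2$.} The numbers $c-1$ and $r-1$ are positive and of different parity. By the base case, $[c-1]$ and $[1^{r-1}]$ therefore contribute the values $1$ and $2$. Because $r\neq c$ and both are at least $2$, at least one of them is at least $3$. Consequently one of $\Gamma_{r-1,c}$ or $\Gamma_{r,c-1}$ has both dimensions at least $2$ and dimensions of equal parity, so it has value $0$.

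It remains to show that $3$ is not an option value. The hook options $\Gamma_{r-1,c}$ and $\Gamma_{r,c-1}$ have dimensions of equal parity. By the induction hypothesis their values are $0$ when both dimensions are at least $2$, and $(\text{even})\bmod 2+1=1$ otherwise. The row and column options have values in $\{1,2\}$. So the option values are exactly $\{0,1,2\}$, and the $\mex$ is $3$.

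The only delicate point is the bookkeeping of the degenerate cases where an option has a dimension equal to $1$. This is handled above by the explicit evaluation of the formula at $\min=1$.
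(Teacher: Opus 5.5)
Your proof is correct and is essentially the paper's argument: the paper quotes this formula from earlier work, but re-derives it as the normal-play coordinate of the induction on $r+c$ in the proof of \cref{thm:cgpairshooks}, using the same four types of options ($[1^{r-1}]$, $\Gamma_{r-1,c}$, $[c-1]$, $\Gamma_{r,c-1}$) together with conjugation symmetry. The only cosmetic difference is that you absorb the cases $r=2$ or $c=2$ into the general step by evaluating the formula at $\min(r,c)=1$, whereas the paper treats $[c,1]$ (and, by conjugation, $[2,1^{r-1}]$) as a separate case.
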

Let $c$, $b$, and $r$ be positive integers such that $c \geq b$. An \emph{almost-hook}  $\Gamma_{r,c}^{1,b}$  is a partition of form $[c,b^{r-1}]$, with $r$ rows and $c$ columns such that the first column is repeated $b$ times.

\begin{theorem}
\label{thm:almost-hook}  
Let $\Gamma_{r,c}^{1,b}$ be an almost-hook such that 
$r \geq 4, \  c \geq 3, \ b \geq 3$ and $b \leq c$. 
Then 
$$\sg \left(\Gamma_{r,c}^{1,b} \right) = 
\begin{cases}
(r+c) \bmod 2 & \text{if $c$ and $b$ have the same parity;}\\
2 & \text{if $r$ and $c$ have the same parity distinct from $b$;} \\
3 & \text{otherwise.}
\end{cases}$$
\end{theorem}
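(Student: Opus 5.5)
We argue by strong induction on the number of cells, with the value in each parity case obtained by a mex computation over the options. We first list the moves from $\lambda=\Gamma^{1,b}_{r,c}=[c,b^{r-1}]$. Deleting the first row gives the rectangle $R_{r-1,b}$. Deleting any other row gives $[c,b^{r-2}]=\Gamma^{1,b}_{r-1,c}$; all choices of such a row give the same partition after merging. Deleting one of the first $b$ columns gives $[c-1,(b-1)^{r-1}]=\Gamma^{1,b-1}_{r,c-1}$. Deleting one of the last $c-b$ columns, when $c>b$, gives $[c-1,b^{r-1}]=\Gamma^{1,b}_{r,c-1}$. So $\lambda$ has at most four distinct options, and $\sg(\lambda)$ is the mex of their values.

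In the generic case all four options again satisfy the hypotheses of the theorem ($r-1\ge 4$, $b-1\ge 3$, $c-1>b$). We then use \cref{lem:rectangle} for $R_{r-1,b}$, which has value $0$ if $r+b$ is odd and $1$ if $r+b$ is even, and the induction hypothesis for the three almost-hooks. We check the three parity cases of the formula.

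\textbf{Case $c\equiv b$.} The options have values $\sg(R_{r-1,b})\in\{0,1\}$ and $(r-1+c)\bmod 2$ (twice). The remaining almost-hook $\Gamma^{1,b}_{r,c-1}$ now has $c-1\not\equiv b$, so it has value $2$ or $3$.
\begin{itemize}
\item If $r+c$ is even, the values are $1,1,1$ and $3$ (since $r\not\equiv c-1$), so the mex is $0$.
\item If $r+c$ is odd, the values are $0,0,0$ and $2$, so the mex is $1$.
\end{itemize}
This matches $(r+c)\bmod 2$.

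\textbf{Case $c\not\equiv b$ and $r\equiv c$.} The values $0$, $1$ and $3$ all occur among the options, and $2$ does not, so the mex is $2$.

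\textbf{Case $c\not\equiv b$ and $r\not\equiv c$.} The values $0$, $1$ and $2$ all occur, and $3$ does not, so the mex is $3$.

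Each of these is a short parity check.

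The main obstacle is the boundary, where some option leaves the range of the theorem. There are three ways this happens.
\begin{itemize}
\item $r=4$: the option $\Gamma^{1,b}_{3,c}$ is a three-part partition.
\item $b=3$: the option $\Gamma^{1,2}_{r,c-1}$ has second part $2$.
\item $c=b$ or $c=b+1$: one column option is a rectangle or disappears. When $c=b$, there is no column move of the second kind; the first kind gives $R$-type shapes via $[(c-1)^{r}]$.
\end{itemize}
For these I would first establish auxiliary lemmas by the same option-listing technique.
\begin{itemize}
\item The values of $[c,b^{r-1}]$ for $b\in\{1,2\}$. The case $b=1$ is \cref{lem:hook}; for $b=2$ the options reduce to hooks, rectangles of width $\le 2$, and two-part partitions, using \cref{lem:bordercase_a2}.
\item The values of three-part partitions $[c,b,b]$, computed from \cref{lem:bordercase_a2} and \cref{lem:rectangle}.
\end{itemize}
I would then verify the base layer ($r=4$, $b=3$, and $c\in\{b,b+1\}$) by direct mex computations. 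The delicate point is that a boundary option may take a value such as $2$ or $3$ where the generic pattern expects $0$ or $1$, or vice versa. In each such case I would check that the value needed for the mex is still provided by one of the other options, so that the four-option pattern above survives. If it fails somewhere, the statement's hypotheses $r\ge4$, $b\ge3$ indicate exactly the base cases that must be tabulated separately.
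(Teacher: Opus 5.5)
The paper states this theorem without proof (it is quoted from earlier work on \textsc{CRIM}), so I assess your argument on its own. Your list of options is correct, and so is the generic step. In the order $R_{r-1,b}$, $\Gamma^{1,b}_{r-1,c}$, $\Gamma^{1,b-1}_{r,c-1}$, $\Gamma^{1,b}_{r,c-1}$, the option values are:
\begin{itemize}
\item $(1,1,1,3)$ or $(0,0,0,2)$ when $c\equiv b$, according as $r+c$ is even or odd;
\item $(0,3,3,1)$ when $c\not\equiv b$ and $r\equiv c$;
\item $(1,2,2,0)$ when $c\not\equiv b$ and $r\not\equiv c$.
\end{itemize}
One correction: $c=b+1$ is not a boundary case. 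The option $\Gamma^{1,b}_{r,b}=R_{r,b}$ lies within the statement, and \cref{lem:rectangle} gives it exactly the predicted value $(r+b)\bmod 2$. The case $c=b$ is just \cref{lem:rectangle}. So the only real boundaries are $r=4$ and $b=3$.

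\textbf{The gap.} These boundaries are precisely where the hypotheses $r\ge4$, $b\ge3$ matter, and you only announce them (``I would verify\ldots; if it fails somewhere, tabulate''). So the induction is not actually closed.

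The auxiliary lemmas are also not obtained the way you describe. For $c\ge3$ the options of $[c,2^{r-1}]$ are $R_{r-1,2}$, $\Gamma_{r,c-1}$, $[c,2^{r-2}]$ and $[c-1,2^{r-1}]$. The options of $[c,b,b]$ include $[c-1,b-1,b-1]$ and $[c-1,b,b]$. So each family needs its own induction. The first starts from $[c,2]$, which \cref{lem:bordercase_a2} does not cover; the second bottoms out at $[c,2,2]$. Doing this gives:
\begin{itemize}
\item for $r,c\ge2$, $\sg([c,2^{r-1}])=0,3,2,1$ according as $(r,c)$ is (even, even), (even, odd), (odd, even), (odd, odd);
\item for $c\ge b\ge3$, $\sg([c,b,b])$ is $0$ if $b,c$ are odd, $1$ if both are even, and $2$ if $c$ is odd and $b$ even;
\item for $c$ even and $b$ odd, $\sg([c,b,b])$ is $3$ if $b=3$ but $1$ if $b\ge5$ (e.g.\ $\sg([6,5,5])=1$).
\end{itemize}
Neither family obeys the theorem's formula, so you must check that the mex survives. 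Against your generic table, the boundary option deviates in exactly three situations, and each time another option supplies the missing value:
\begin{itemize}
\item $r=4$, $c$ even, $b\ge5$ odd: $\Gamma^{1,b}_{3,c}=[c,b,b]$ has value $1$ instead of $3$, but $\Gamma^{1,b-1}_{4,c-1}$ still has value $3$. The options are $(0,1,3,1)$, so the mex is $2$.
\item $b=3$, $r\ge5$ odd, $c\ge5$ odd: $[c-1,2^{r-1}]$ has value $2$ instead of $1$. The options are $(1,1,2,3)$, so the mex is $0$.
\item $b=3$, $r\ge5$ odd, $c$ even: $[c-1,2^{r-1}]$ has value $1$ instead of $2$, but $\Gamma^{1,3}_{r-1,c}$ still has value $2$. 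The options are $(1,2,1,0)$, so the mex is $3$.
\end{itemize}
In every other boundary configuration, including $r=4$ and $b=3$ together, the boundary value coincides with the formula, so your table applies verbatim.

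Once you prove the two auxiliary lemmas and write out this check, your induction is complete. As submitted, it stops short of the one part of the argument that is not a routine parity check.
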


\begin{theorem}
\label{thm:thick-hook}
Let $\sg\left(\Gamma_{r,c}^{a,b}\right)$ be a thick hook. Then
$$\sg\left(\Gamma_{r,c}^{a,b}\right) = 
\begin{cases}
0 & \text{if $r$ and $c$ have the same parity} \\
2 & \text{if $r$ and $c$ have different parities and $a = b = 2$} \\
1 & \text{if $r$ and $c$ have different parities} 

\end{cases}$$
\end{theorem}

% The following corollary follows immediately from ***. We conjecture that it is true more generally; see ***. 
% \begin{cor}
%     Let $\lambda$ be a rectangle, hook, almost hook, or thick hook. Then $\lambda \in \p$ if and only if $\lambda$ is stable. 
% \end{cor}
\newpage
\section{\crim}  \label{sec:CR}

\crim is played on partitions. If $\lambda = [\lambda_1, \cdots, \lambda_r]$ then a  \emph{row move} consists of removing the $i$th part of $\lambda$ for some $i$, $1 \leq i \leq r$, yielding the partition $[\lambda_1, \ldots, \lambda_{i-1}, \lambda_{i+1}, \ldots, \lambda_r]$. A \emph{column move} consists of taking the conjugate of the partition that results from making a row move on $\lambda'$. We can think of \crim as being played on Young diagrams, where a row (resp. column) move consists of removing a row (resp. column) of the Young diagram (see \cref{fig:cr-moves}).

\begin{figure}[h!]
    \centering
\tikzset{every picture/.style={line width=0.75pt}}     
\includegraphics[scale=0.85]{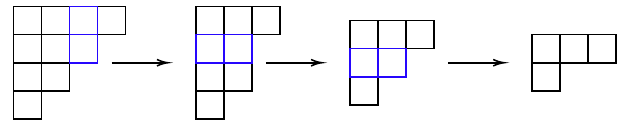}
\caption{ An example of a game of \crim. The second move is the perpendicular response to the removal of the third column, while the third one is the parallel response to the removal of the second row.}
\label{fig:cr-moves}
\end{figure}

\noindent
The following two responses are often useful in describing winning strategies for the previous player, when they exist. 

\begin{definition}
    Let $\lambda = [\lambda_1,\dots, \lambda_r]$ be a partition with $\lambda_1,r\ge 2$ and let $\lambda' = [\lambda'_1,\dots, \lambda'_{\lambda_1}]$ be its conjugate. Then: 
    \begin{itemize}
        \item For any $i\in [r]$ we say that the \emph{perpendicular response} to the removal of the $i$th row of $\lambda$ is the removal of the $\lambda_i$-th column of the resulting partition.
        \item For any $i\in \left[2\left\lfloor r/2\right\rfloor \right]$ we say that the \emph{parallel response} to the removal of the $i$th row of $\lambda$ is the removal of the $i$th row of the resulting partition when $i$ is odd, and the $(i-1)$th row otherwise.
        \item For any $i\in [\lambda_1]$ we say that the \emph{perpendicular response} to the removal of the $i$th column of $\lambda$ is the removal of the $\lambda'_i$-th column of the resulting partition.
        \item For any $i\in \left[2\left\lfloor \lambda_1/2\right\rfloor \right]$ we say that the \emph{parallel response} to the removal of the $i$th column of $\lambda$ is the removal of the $i$th column of the resulting partition when $i$ is odd, and the $(i-1)$th column otherwise.
    \end{itemize}
\end{definition}

\subsection{Partitions with a small number of parts} 

We now characterize losing partitions having two or three parts. An alternative proof of \cref{thm:2parts} for $3 \leq \lambda_2 \leq \lambda_1$ follows from \cite[Lemma 3.8]{Basic_2023}. \cref{thm:2parts} is a stronger version of this result since it only requires that $1 \leq \lambda_2 \leq \lambda_1$. 

\begin{restatable}{proposition}{thmtwoparts} \label{thm:2parts}
    A partition $\lambda = [ \lambda_1, \lambda_2] \in \p$ if and only if $\lambda_1$ is even and $\lambda_2 > 0$. 
\end{restatable}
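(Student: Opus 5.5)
We argue by strong induction on $|\lambda| = \lambda_1+\lambda_2$. We treat $[\lambda_1,\lambda_2]$ with $\lambda_2 = 0$ as the one-row partition $[\lambda_1]$. Two base facts are used throughout. First, the empty partition $[\ ]$ is terminal, hence a $\p$-position. Second, every nonempty one-row partition $[c]$ is an $\n$-position, since removing its unique row leads to $[\ ]$. This already settles the case $\lambda_2=0$ of the statement. So assume $\lambda_1 \ge \lambda_2 \ge 1$. It then suffices to show two things: if $\lambda_1$ is even, every move from $\lambda$ leads to an $\n$-position; and if $\lambda_1$ is odd, some move leads to a $\p$-position.

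We first list the options of $\lambda=[\lambda_1,\lambda_2]$. Removing row $1$ gives $[\lambda_2]$, and removing row $2$ gives $[\lambda_1]$. Both are nonempty single rows, hence $\n$-positions. The first $\lambda_2$ columns have height $2$, so removing column $j \le \lambda_2$ yields $[\lambda_1-1,\lambda_2-1]$. A column $j>\lambda_2$ exists only when $\lambda_2<\lambda_1$, and removing it yields $[\lambda_1-1,\lambda_2]$. In every case the first part becomes $\lambda_1-1$.

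Suppose $\lambda_1$ is even. Then $\lambda_1 \ge 2$, so every column move produces a partition whose first part $\lambda_1-1 \ge 1$ is odd. There are two possibilities for such an option. If its second part is $0$ (which happens only for $[\lambda_1-1,\lambda_2-1]$ with $\lambda_2=1$), it is a nonempty single row and hence an $\n$-position. Otherwise its second part is positive, and it is an $\n$-position by the induction hypothesis. Together with the two row moves, this shows that every option of $\lambda$ is an $\n$-position, so $\lambda \in \p$.

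Now suppose $\lambda_1$ is odd. If $\lambda_2<\lambda_1$, we remove column $\lambda_1$ to reach $[\lambda_1-1,\lambda_2]$. Its first part is even and its second part is positive, so it is a $\p$-position by induction. If $\lambda_2=\lambda_1=a$ is odd, we remove column $1$ to reach $[a-1,a-1]$. When $a \ge 3$, this is a $\p$-position by induction, since $a-1$ is even and positive. When $a=1$, it is $[\ ]$, which is a $\p$-position. In each case $\lambda \in \n$, which completes the induction.

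The only delicate part is the bookkeeping at the boundaries rather than any real obstacle. One must make sure that column moves which wipe out the second row are handled by the one-row fact instead of the induction hypothesis. One must also deal separately with the square case $\lambda_1=\lambda_2$, where no column of height $1$ is available, and with the degenerate partition $[1,1]$.
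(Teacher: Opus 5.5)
Your proof is correct and is essentially the paper's argument. Both are inductions built on the same two facts: row moves leave a nonempty single row, and every column move lowers $\lambda_1$ by one. The only real difference is in the even case, where you show every option is an $\n$-position (using the odd case of the induction hypothesis) instead of exhibiting an explicit winning reply to each move.

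Your organization is in fact slightly safer than the paper's. The paper says that after a height-$2$ column is removed with $\lambda_2>1$, ``any column move'' wins. This fails for $\lambda_2=2$: deleting the first column of $[\lambda_1-1,1]$ leaves the $\n$-position $[\lambda_1-2]$. Your explicit treatment of one-row options and of $[1,1]$ also covers boundary cases the paper leaves implicit.
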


\begin{proof}
    We proceed by induction on $\lambda_1$. First suppose $\lambda_1$ is even. 
    For the base step, recall that $[2,1] \in \p$ and $[2,2] \in \p$. 
    For the induction step, suppose $\lambda_1 > 2$. Note that either row move allows for a winning parallel response, so suppose the first player removes a column. 
    If the column move intersects only the first row, then the removal of rightmost column is a winning response. 
    If the column move intersects the second row and $\lambda_2 = 1$ then removing the sole remaining row wins. 
    Finally, if the column move intersects the second row and $\lambda_2>1$ then any column move is a winning response. 

    Now suppose $\lambda \in \p$. 
    If $\lambda_1$ is odd and at least 3, then the removal of a column other than the first one leaves a partition $\bar \lambda = [\bar \lambda_1, \bar \lambda_2]$ where $\bar \lambda_1$ is even. 
    By induction, $\bar \lambda \in \p$, a contradiction. So $\lambda_1$ is even. 
\end{proof}

% \begin{theorem}
%     If $\lambda = [\lambda_1, \lambda_2, \lambda_3]$ then $\lambda \in \p$ if and only if $\lambda_1$ and $\lambda_2$ are odd and either $\lambda_2 = \lambda_3 = 1$ or $\lambda_3 > 1$. 
% \end{theorem}
\begin{restatable}{theorem}{thmparts} \label{thm:3parts}
    Let $\lambda = [\lambda_1, \lambda_2, \lambda_3]$ be a partition on three parts. 
    Then $\lambda \in \p$ if and only if $\lambda_1$ and $\lambda_2$ are odd and either $\lambda$ is a hook or $\lambda_3 > 1$. 
\end{restatable}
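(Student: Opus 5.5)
The proof is a simultaneous induction on $|\lambda|$. Let $\mathcal{Q}$ denote the set of three-part partitions with $\lambda_1,\lambda_2$ odd and either $\lambda_3>1$ or $\lambda$ a hook. Since $\crim$ is finite and impartial, it suffices to show two things. First, no move from a partition in $\mathcal{Q}$ reaches a $\p$-position. Second, every three-part partition outside $\mathcal{Q}$ has a move to a $\p$-position. The possible targets are partitions in $\mathcal{Q}$ of smaller size (induction), two-part partitions (classified by \cref{thm:2parts}: $\p$ iff the first part is even), one-part partitions $[c]$ (always $\n$, since removing the row empties them), and $[\ ]$ (a $\p$-position).

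\emph{Moves from $\mathcal{Q}$.} There are two kinds of moves.
\begin{itemize}
\item \emph{Row moves.} Every row move produces a two-part partition whose first part is $\lambda_1$ or $\lambda_2$. Both are odd, so the result is $\n$ by \cref{thm:2parts}.
\item \emph{Column moves, $\lambda_3>1$.} Removing column $j$ keeps all three rows. It decreases $\lambda_1$ by one, and decreases $\lambda_2$ by one exactly when $j\le\lambda_2$. If $j>\lambda_2$, the new first part is even. If $j\le\lambda_2$, the new first two parts are both even. Either way the result lies outside $\mathcal{Q}$, and is $\n$ by induction.
\item \emph{Column moves, hook $[c,1,1]$.} Removing a column $j>1$ gives $[c-1,1,1]$ with $c-1$ even, which is outside $\mathcal{Q}$. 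Removing column $1$ gives the single row $[c-1]$, which is $\n$ provided $c\ge 3$.
\end{itemize}
Here a degenerate case appears. The column $[1^3]$ formally satisfies the hypotheses, but removing its only column empties it, so it is an $\n$-position. Accordingly, \cref{lem:hook} gives a nonzero value when $\min(r,c)=1$. I will therefore read the statement with $\lambda_1>1$ in the hook case (equivalently, exclude $[1^3]$) and flag this explicitly.

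\emph{Winning moves from outside $\mathcal{Q}$.} I split into three cases.
\begin{itemize}
\item If $\lambda_1$ is even, remove the third row. The result $[\lambda_1,\lambda_2]$ is $\p$ by \cref{thm:2parts}.
\item If $\lambda_1$ is odd and $\lambda_2$ is even, remove the first row. The result $[\lambda_2,\lambda_3]$ has even first part and positive second part, so it is $\p$.
\item If $\lambda_1,\lambda_2$ are odd and $\lambda\notin\mathcal{Q}$, then $\lambda_3=1$ and $\lambda$ is not a hook, so $\lambda_2\ge 3$. Remove the first column. This leaves $[\lambda_1-1,\lambda_2-1]$, which has even first part and positive second part, hence is $\p$.
\end{itemize}
These cases are exhaustive, which completes the induction.

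The main obstacle is bookkeeping rather than ideas. One must track exactly which rows shrink or disappear under a column move; the only way to lose a row is $\lambda_3=1$ together with removal of column $1$. One must also handle the boundary cases correctly: $\lambda_2=1$, $\lambda_3=1$, and the tiny partitions such as $[1^3]$ and $[c,1,1]$. I would check these small cases directly against \cref{lem:hook} and \cref{lem:rectangle} as a sanity test.
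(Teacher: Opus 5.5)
Your proof is correct. The ``only if'' half is exactly the paper's argument: from outside $\mathcal{Q}$, delete row $3$ if $\lambda_1$ is even, row $1$ if $\lambda_2$ is even, and column $1$ if $\lambda_3=1<\lambda_2$.

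The ``if'' half is organized differently. When $\lambda_3>1$, the paper gives an explicit second-player strategy: answer any column move by deleting the third row, and any row move by deleting the first column. Each time this lands on a two-part $\p$-position from \cref{thm:2parts}. The paper dispatches hooks by citing \cref{lem:hook}. You instead check that every option is an $\n$-position, using \cref{thm:2parts} and the other half of the theorem, and you handle hooks by hand. Your induction on $|\lambda|$ is actually superfluous, since the $\n$-half is proved directly. The paper's route gives the strategy explicitly. Yours is self-contained and does not rely on the hook formula, and it is exactly this hand treatment of hooks that surfaces the degenerate case.

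Your flag is right, and it points to a genuine flaw in the statement as printed. $[1,1,1]=\Gamma_{3,1}$ meets the hypotheses but is an $\n$-position. Moreover, \cref{lem:hook} itself gives $\sg(\Gamma_{3,1})=1$, so the paper's appeal to it does not cover this case. The hook clause needs $\lambda_1>1$.

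One bookkeeping repair: once $[1^3]$ is removed from $\mathcal{Q}$, your third case (``then $\lambda_3=1$ and $\lambda$ is not a hook'') is no longer exhaustive. List $[1^3]$ as a separate case, with the winning move to $[\ ]$.
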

% \begin{theorem} \label{thm:3parts}
%     Let $\lambda = [\lambda_1, \lambda_2, \lambda_3]$ be a partition on three parts. 
%     Then $\lambda \in \p$ if and only if $\lambda_1$ and $\lambda_2$ are odd and either $\lambda$ is a hook or $\lambda_3 > 1$. 
% \end{theorem}
\begin{proof}
    Suppose $\lambda \in \p$. If $\lambda_j$ is even for $j = 1$ or $j = 2$, then the removal of $\lambda_{2-j}$ wins by \cref{thm:2parts}, a contradiction. Thus, $\lambda_1$ and $\lambda_2$ must both be odd. If $\lambda_2 > 1$ and $\lambda_3 = 1$, then the removal of the first column wins, again by \cref{thm:2parts}, another contradiction, so either $\lambda$ is a hook or $\lambda_3 > 1$. 

    Now suppose that $\lambda_1$ and $\lambda_2$ are both odd. If $\lambda$ is a hook then $\lambda \in \p$ by \cref{lem:hook}. Suppose instead that $\lambda_3 > 1$. We show that every move has a winning response. If the first player removes a column, then the removal of the third row wins by \cref{thm:2parts}. If the first player removes a row, then the removal of the first column wins, again by \cref{thm:2parts}. Thus $\lambda \in \p$. 
\end{proof}

\cref{prop:hooksquare} is closely related to \cref{lem:hook}.

\begin{restatable}{proposition}{prophooksquare} \label{prop:hooksquare}
    Let $r \ge 2$ and $c \ge 2$ be integers.
    Then $[c,2,1^{r-2}] \in \p$ if and only if $r$ and $c$ are even.
\end{restatable}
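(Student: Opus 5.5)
The plan is induction on $r+c$, using the conjugation symmetry of the family together with the hook values from \cref{lem:hook} and the two-part result \cref{thm:2parts}. Write $H_{r,c}=[c,2,1^{r-2}]$. Its conjugate is $[r,2,1^{c-2}]=H_{c,r}$. Since conjugation preserves $\p/\n$-status (row and column moves are swapped), the statement is symmetric in $r$ and $c$.

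\textbf{Base cases.} If $r=2$, then $H_{2,c}=[c,2]$. By \cref{thm:2parts} this is in $\p$ iff $c$ is even, which matches the claim because $r=2$ is even. The case $c=2$ follows by conjugation. So from now on I assume $r,c\ge 3$.

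\textbf{List of moves.} From $H_{r,c}$ the row moves give:
\begin{itemize}
\item deleting row $1$ gives $[2,1^{r-2}]=\Gamma_{r-1,2}$;
\item deleting row $2$ gives $[c,1^{r-2}]=\Gamma_{r-1,c}$;
\item deleting any row $i\ge 3$ gives $H_{r-1,c}$.
\end{itemize}
By conjugation, the column moves give:
\begin{itemize}
\item deleting column $1$ gives $[c-1,1]=\Gamma_{2,c-1}$;
\item deleting column $2$ gives $\Gamma_{r,c-1}$;
\item deleting any column $j\ge 3$ gives $H_{r,c-1}$.
\end{itemize}

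\textbf{Case $r,c$ both even: every move loses.} I show each option is in $\n$. Each hook option, $\Gamma_{r-1,2}$, $\Gamma_{r-1,c}$, $\Gamma_{2,c-1}$ and $\Gamma_{r,c-1}$, has its two parameters of different parity. So by \cref{lem:hook} its Grundy value is $3$, or $(r+c)\bmod 2+1\ge 1$ in the degenerate case $\min=1$; either way it is nonzero. The option $H_{r-1,c}$ has $r-1$ odd, so it is in $\n$ by induction. The option $H_{r,c-1}$ is in $\n$ by induction for the same reason. Hence $H_{r,c}\in\p$.

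\textbf{Case $r$ or $c$ odd: a winning move exists.} If $r$ is odd (with $r\ge 3$), delete row $1$. This leaves $\Gamma_{r-1,2}$, where $r-1\ge 2$ and $2$ are both even. With $\min\ge 2$ and equal parity, \cref{lem:hook} gives Grundy value $0$. If $c$ is odd, delete column $1$ to reach $\Gamma_{2,c-1}$, which is losing for the same reason. Thus $H_{r,c}\in\n$ unless $r$ and $c$ are both even.

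\textbf{Main obstacle.} The only real danger is bookkeeping at the edges. I would check that each resulting shape is really the stated hook or hook-square, in particular that deleting row $2$ does not accidentally reattach into a non-hook shape. I would also check that the degenerate hooks with $\min(r,c)=1$ never produce a $\p$-position in the even–even case. Once the move list above is confirmed, the induction goes through directly.
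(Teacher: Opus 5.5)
Your proof is correct and is essentially the paper's argument: a case analysis of the moves from $[c,2,1^{r-2}]$ (first, second, or later row/column), settled by \cref{lem:hook} and \cref{thm:2parts} together with conjugation symmetry. The differences are minor. In the even--even case you show each option is an $\n$-position: the odd-by-even hooks have nonzero value, and $H_{r-1,c}$, $H_{r,c-1}$ fall to your first-row/first-column move, so your ``induction'' is really just an appeal to your own odd case. The paper instead exhibits explicit replies, such as the parallel response down to $H_{r-2,c}$. For odd $c$ you always delete the first column, whereas the paper deletes the third column when $r$ is even.
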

\begin{proof}
    Let $\lambda = [c,2,1^{r-2}]$ and suppose $r$ and $c$ are even. 
    By symmetry it is enough to show that for any $i\in \{1,2, \dots,r\}$ the removal of $i$th row, yielding $\bar \lambda$, admits a winning response.
    
    Suppose $i\in\{1, 2\}$. If $r > 2$, the removal of the second row of $\bar \lambda$ wins by \cref{lem:hook}.  If $r = 2$, the parallel response wins.
    Similarly, if $i\ge 3$, the parallel response wins. 

    For the other direction, suppose without loss of generality that $c$ is odd. We will show that there is a move to a $\p$-position. If $r$ is even, then the removal of the third column wins. If $r$ is odd, the removal of the first column wins. 
\end{proof}

\subsection{Odds-Are-Even and Evens-Are-Odd partitions}

In this section we prove the following theorem, and then use it to establish the $\n/\p$-status of a variety of families of partitions.
\thmoddsareeven*
\begin{proof}
Let $\lambda \vdash n$ be an $\OAE$-partition. We proceed by induction on $n$.
The result holds if $n=0$ so suppose $n > 0$.
Regardless of the move of the first player, the previous player can make a parallel response, leading to partition $\bar{\bar \lambda}$.
This is always possible because the number of rows is even. 

It remains to show that $\bar{\bar \lambda}$ is an OAE-partition.
Without loss of generality assume that the rows $k-1$ and $k$ are removed and let $\bar{\bar \lambda} = [\bar{\bar \lambda}_1, \ldots, \bar{\bar \lambda}_{r-2}] = [\lambda_1, \ldots, \lambda_{2k-2}, \lambda_{2k+1}, \ldots, \lambda_{r}]$ be the resulting partition. 
As the lengths of rows did not change, the odd-indexed rows of $\bar{\bar \lambda}$ remain of even length. It remains to show that the odd-indexed columns of $\bar{\bar \lambda}$ shortened by either $0$ or $2$. 
Consider the $j$th column, where $j$ is odd. Either $\lambda_{2k-1} < j$, in which case $\bar{\bar \lambda}_j' = \lambda_j'$, or $\lambda_{2k-1} \ge j$. If $\lambda_{2k} < j$ then $\lambda_j'$ would be odd, contrary to assumption. 
Thus, we must also have $\lambda_{2k} \geq j$ since $\lambda_j'$, so $\bar{\bar \lambda}_j' = \lambda_j'-2$. 
\end{proof}

The sequence with $n$th term equal to the number of odds-are-even partitions among the partitions of $n$ is not recognized by OEIS. %0, 0, 1, 1, 0, 0, 2, 2, 0, 1, 4, 3, 0, 2, 7, 5, 0, 5, 12, 7, 1, 9, 19, 11, 2, 17, 30, 15, 5, 28

Next we use \cref{thm:oddsareeven} to characterize losing $\EAO$-partitions. 

\begin{proposition} \label{prop:evensareodd1}
Let $\lambda$ be an $\EAO$-partition with $\lambda_1 > 1$ columns and $r > 1$ rows. If $\lambda$ has even rank, then $\lambda$ is a $\p$-position. 
\end{proposition}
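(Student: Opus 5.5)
The plan is induction on $|\lambda|$. For each first-player move I give a response that lands either in an $\OAE$-partition, which is a $\p$-position by \cref{thm:oddsareeven}, or in a smaller $\EAO$-partition with even rank, at least two rows and at least two columns, which is a $\p$-position by induction. By conjugation symmetry it suffices to treat row moves. Note that for $\EAO$ partitions even rank means $r \equiv \lambda_1 \pmod 2$.

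\emph{Key observation.} Removing both the first row and the first column of an $\EAO$-partition gives an $\OAE$-partition.
\begin{itemize}
\item The new $i$th row has length $\lambda_{i+1}-1$. For odd $i$, $\lambda_{i+1}$ is odd, so this length is even.
\item The same argument applies to the columns.
\end{itemize}
So if the opponent removes the first row, I answer by removing the first column, and vice versa. This wins by \cref{thm:oddsareeven}.

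\emph{Removal of an interior row $i \ge 2$.} Pair the rows as $(2,3),(4,5),\dots$ and answer by removing the partner row. I claim the result is $\EAO$.
\begin{itemize}
\item Rows shift by two, so every surviving row keeps the parity of its index.
\item Take an even-indexed column $j$. If it met row $2k$ but not row $2k+1$, then $\lambda'_j=2k$ would be even, which is impossible. So such a column loses $0$ or $2$ cells and stays odd.
\item The rank is preserved, since $r$ drops by $2$ and $\lambda_1$ is unchanged (row $1$ is untouched).
\end{itemize}
The induction hypothesis applies whenever at least two rows remain. The degenerate case is $r=3$: removing rows $2,3$ leaves the single row $[\lambda_1]$, which is an $\n$-position. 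Here I would instead use \cref{thm:3parts} directly. The three-row partition $[\lambda_1,\lambda_2,\lambda_3]$ has $\lambda_1,\lambda_2$ odd; I must check it is a hook or has $\lambda_3>1$. The excluded case is $\lambda_2>1=\lambda_3$. Then column $2$ has length $2$, which is even, contradicting the $\EAO$ condition. So \cref{thm:3parts} covers the whole case $r=3$, and similarly $\lambda_1=3$ by conjugation.

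\emph{The unpaired last row when $r$ is even, so $\lambda_1$ is even.} This is the main obstacle. My first attempt is the perpendicular response, removing column $\lambda_r$, or alternatively the last column $\lambda_1$.
\begin{itemize}
\item Then check whether the result is $\OAE$, or $\EAO$ with even rank. Row parities are mostly preserved, but the rows crossing the removed column shrink by one.
\item If neither family is reached, try instead the first-row/first-column trick in a modified form.
\item Failing both, reduce to the small-case results (\cref{thm:2parts}, \cref{lem:hook}, \cref{thm:thick-hook}) for the boundary shapes.
\end{itemize}
I expect this case, and any residual small bases such as $r=2$ (handled by \cref{thm:2parts} since $\lambda_1$ is even), to need the most care.
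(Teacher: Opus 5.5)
Your treatment of a first-row move (reply with the first column, landing in $\OAE$), of rows $i\ge 2$ via the pairing $(2,3),(4,5),\dots$ (landing in a smaller $\EAO$-partition of even rank), and of the bases $r=2$, $r=3$ via \cref{thm:2parts,thm:3parts} is correct, and it is the paper's argument. The gap is the case you flag and leave open: $r\ge 4$ is even and the opponent deletes the unpaired row $r$. You only list candidate replies without checking any of them, and the first one fails. Here $\lambda_r=1$ (see below), so the perpendicular reply deletes column $1$. For $\lambda=[4,3,3,1]$ this gives $[4,3,3]\to[3,2,2]$, which is in $\n$ because it moves to $[2,2]$. The paper's written proof skips this case too: its prescribed reply, the $i$th row of $\bar\lambda$ for even $i$, does not exist when $i=r$.

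The missing idea is that when $r$ and $\lambda_1$ are both even, the last row and the last column of $\lambda$ are single cells.
\begin{itemize}
\item $\lambda'_2$ is odd while $\lambda'_1=r$ is even, so $\lambda'_2<r$, i.e.\ $\lambda_r=1$.
\item Symmetrically, $\lambda_2$ is odd while $\lambda_1$ is even, so $\lambda_2<\lambda_1$, i.e.\ $\lambda'_{\lambda_1}=1$.
\end{itemize}
Hence deleting row $r$ only shortens column $1$. If $\lambda_1\ge 4$, you should answer by deleting column $\lambda_1$, which only shortens row $1$. Both changes occur in odd-indexed lines, so $[\lambda_1-1,\lambda_2,\dots,\lambda_{r-1}]$ is an $\EAO$-partition with $r-1\ge 3$ rows and $\lambda_1-1\ge 3$ columns. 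Both of these are odd, so it has even rank and is in $\p$ by induction.

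If $\lambda_1=2$, then $\lambda=[2,1^{r-1}]$, and this reply leaves the column $[1^{r-1}]\in\n$. Instead, delete another row of length one, reaching $[2,1^{r-3}]$. This is an $\EAO$-partition of even rank with at least two rows, hence in $\p$ by induction (or by \cref{lem:hook}). With this case added, your induction on $|\lambda|$ is complete.
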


\begin{proof}
Let $\lambda = [\lambda_1, \ldots, \lambda_r]$. Without loss of generality assume that the first player takes a row. We use induction on $r$ to show that $\lambda \in \p$. 
We treat the base cases $r = 2$ and $r = 3$. 
If $r = 2$ then $\lambda_1$ is even since the rank is even so $\lambda \in \p$ by \cref{thm:2parts}. 
If $r = 3$ then $\lambda_2$ is odd. Since there cannot be two columns of length $2$, we have $\lambda_2-\lambda_3\le1$ so $\lambda \in \p$ by \cref{thm:3parts}.

For the induction step suppose $r\ge 4$. We distinguish two cases. If the first player takes the the first row, the previous player should take the first column. The resulting partition is in $\OAE$ and hence in $\p$ due to \cref{thm:oddsareeven}.
Assume now that the first player takes the $i$th row with $i\ge 2$, resulting in the partition $\bar \lambda$. The previous player should remove the $(i-1)$st row of $\bar\lambda$ if $i$ is odd, and the $i$th otherwise. As in the proof of \cref{thm:oddsareeven}, the resulting partition is in $\EAO$. 
\end{proof}

We now prove the converse of \cref{prop:evensareodd1}.

\begin{restatable}{proposition}{propevensareoddd} \label{prop:evensareodd2}
Let $\lambda$ be an $\EAO$-partition with $\lambda_1 > 1$ columns and $r > 1$ rows. If $\lambda$ has odd rank, then $\lambda$ is an $\n$-position. 
\end{restatable}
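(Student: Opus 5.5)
We would prove \cref{prop:evensareodd2} by exhibiting, in every case, a single move from $\lambda$ to a $\p$-position. The target is an $\EAO$-partition of even rank, which is a $\p$-position by \cref{prop:evensareodd1}, with a separate treatment of the degenerate case in which the resulting partition has only one row.

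\emph{Reduction.} Conjugation preserves the $\EAO$ property, the rank, and the game, since it exchanges row moves and column moves. Odd rank means that $r$ and $\lambda_1$ have different parities. Replacing $\lambda$ by $\lambda'$ if necessary, we may therefore assume that $r$ is even and $\lambda_1$ is odd.

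\emph{Structural observation.} Since $r$ is even, the $\EAO$ condition forces $\lambda_r = 1$. Indeed, if $\lambda_r \geq 2$, then column $2$ would have length $\lambda_2' = r$. This column has even index but even length, contradicting the $\EAO$ condition.

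\emph{The move.} We remove the last row, obtaining $\bar\lambda = [\lambda_1,\ldots,\lambda_{r-1}]$.
\begin{itemize}
\item Every remaining row keeps its index and its length, so the even-indexed rows are still odd.
\item The only column that changes is column $1$, which shrinks by one. It has odd index, so the $\EAO$ condition places no constraint on it.
\item Hence $\bar\lambda$ is an $\EAO$-partition with $r-1$ rows (odd) and $\lambda_1$ columns (odd), so it has even rank.
\end{itemize}
If $r-1>1$, then $\bar\lambda$ has at least two rows and $\lambda_1 > 1$ columns, so $\bar\lambda \in \p$ by \cref{prop:evensareodd1}. This shows $\lambda \in \n$.

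\emph{Edge case $r=2$.} Here $\lambda = [\lambda_1, 1]$ with $\lambda_1$ odd and $\lambda_1 \geq 3$. The move above is unavailable, because it would leave a single row. Instead we remove any column other than the first, giving $[\lambda_1 - 1, 1]$. Its first part $\lambda_1 - 1$ is even and its second part is positive, so it is a $\p$-position by \cref{thm:2parts}.

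The main obstacle is the structural observation: without $\lambda_r = 1$, removing a row would break the parity of the even-indexed columns. Once that is established, the remaining checks are routine.
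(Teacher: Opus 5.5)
Your proof is correct and is essentially the paper's argument. The paper also removes a length-one row or column so that the result is an $\EAO$-partition of even rank, then applies \cref{prop:evensareodd1}; its observation that one of $\lambda_1-\lambda_2$, $\lambda_1'-\lambda_2'$ is odd is your observation that $\lambda_2'\neq r$ when $r$ is even. Your separate handling of $r=2$ adds care the paper lacks: it never checks that the resulting partition still has at least two rows and two columns. For example, removing the last row of $[3,1]$ gives the $\n$-position $[3]$, so a column must be removed instead.
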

% \propevensareoddd*
\begin{proof}
Let $\lambda = [\lambda_1, \ldots, \lambda_r]$, and let  $\lambda' = [\lambda'_1, \ldots, \lambda'_{\lambda_1}]$ be its conjugate.
We will show that a $\p$ position is reachable from $\lambda$ in a single move.
Observe that
$(\lambda_1 - \lambda_2) + (\lambda_1' - \lambda_2') = (\lambda_1 + r) - (\lambda_2 + \lambda_2')$ is odd, so one of $\lambda_1 - \lambda_2$ and $\lambda_1' - \lambda_2'$ is positive. Thus, there exists a row or column of length one in $\lambda$. If the first player removes it, the resulting partition is losing due to \cref{prop:evensareodd1}. 
\end{proof}

Combining \cref{prop:evensareodd1,prop:evensareodd2}, we obtain the following. 
\corevensareodd*

Recall the definitions of staircase, padded staircase, doubly padded staircase, and quadrated partitions from \cref{subsec:families}. Next we use \cref{thm:oddsareeven,cor:evensareodd} to show establish the $\p/\n$-status of each of these families of partitions.

% Changed observation 1 to proposition 1 and removed corollary about even staircases being in P.
\begin{restatable}{proposition}{proppnoffamilies} \label{prop:pn-of-families}   
    The staircase $\s_n$ is in $\p$ if and only if $n$ is even.
    Padded staircases are in $\n$.
    Doubly padded staircases and quadrated partitions are in $\p$.
\end{restatable}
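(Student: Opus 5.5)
The plan is to place each family inside $\OAE$ or $\EAO$ and then apply \cref{thm:oddsareeven} or \cref{cor:evensareodd}. For the families that turn out to be $\n$-positions, I will exhibit a single move to such a $\p$-position. Every partition in the statement is either self-conjugate or has an easily computed conjugate, so each parity check reduces to reading off row lengths by index.

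\emph{Staircases.} Since $\s_n$ is self-conjugate with $\lambda_i = n-i+1$, for even $i$ the part $\lambda_i$ is odd exactly when $n$ is even. So for even $n$ the partition $\s_n$ is $\EAO$. It has $n$ rows and $n$ columns, so its rank is $0$. For $n\ge 2$ this means $\lambda_1>1$ and $r>1$, and \cref{cor:evensareodd} gives $\s_n\in\p$. For odd $n$ we need a winning move. If $n=1$, removing the only row leaves $[\ ]$, which is terminal. If $n\ge 3$, removing the first row leaves $\s_{n-1}$ with $n-1$ even, which is a $\p$-position.

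\emph{Padded staircases.} Take $\ps_n=[n,n,n-1,\dots,1]$. If $n$ is even, removing the first row leaves $\s_n\in\p$. If $n$ is odd, remove the last row, which has length $1$, to obtain $\mu=[n,n,n-1,\dots,2]$. Then $\mu_i=n-i+2$ for $i\ge 2$, which is odd for even $i$. The conjugate $\mu'$ has column $j$ of length $n$ for $j\le 2$ and length $n-j+2$ for $j\ge 2$, so $\mu'$ is also odd at every even index. Hence $\mu$ is $\EAO$ with $n$ rows and $n$ columns, so its rank is $0$, and $\mu\in\p$ by \cref{cor:evensareodd}. Thus $\ps_n\in\n$ for every $n\ge 2$.

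\emph{Doubly padded staircases.} The partition $\dps_n=[n+1,n+1,n,\dots,2]$ is self-conjugate: the first two columns have length $n+1$, and column $j\ge 3$ has length $n-j+3$. Its parts are $\lambda_1=\lambda_2=n+1$ and $\lambda_i=n-i+3$ for $i\ge 2$.
\begin{itemize}
\item If $n$ is odd, then for odd $i\ge 3$ the part $n-i+3$ is even, and $\lambda_1=n+1$ is also even. So $\dps_n$ is $\OAE$, and \cref{thm:oddsareeven} applies.
\item If $n$ is even, then for even $i$ the part $n-i+3$ is odd, so $\dps_n$ is $\EAO$. It is a square with more than one row and column, so \cref{cor:evensareodd} applies.
\end{itemize}

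\emph{Quadrated partitions.} Write the partition as $[\lambda_1^{m_1},\dots,\lambda_t^{m_t}]$ with all $\lambda_j$ and $m_j$ even. Every part is even, so in particular every odd-indexed row is even. For the columns, every column length is a partial sum $m_1+\dots+m_s$, hence even. So all column lengths are even, including the odd-indexed ones. The partition is therefore $\OAE$, and \cref{thm:oddsareeven} gives that it is in $\p$.

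The only step needing care is the odd-$n$ padded staircase, where the natural moves do not work. Removing the first column produces $\ps_{n-1}$, which is neither $\OAE$ nor $\EAO$. The fix is to find the length-one row whose removal lands in $\EAO$ with rank $0$, as done above. The remaining cases are routine index-parity bookkeeping.
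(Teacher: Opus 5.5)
Your arguments for staircases, padded staircases, quadrated partitions, and the doubly padded staircases $\dps_n=[n+1,n+1,n,\dots,2]$ with $n$ even are correct and essentially the paper's. The gap is in your first bullet for doubly padded staircases. For $n$ odd and $i\ge 3$ odd, the part $n-i+3$ is odd, not even, since odd minus odd plus odd is odd. Concretely, $\dps_3=[4,4,3,2]$ has $\lambda_3=3$, so it is not $\OAE$. It is not $\EAO$ either, because $\lambda_2=n+1$ is even. More generally, every $[m,m,m-1,\dots,2]$ with $m\ge 4$ even lies outside both families. So this case cannot be reduced to \cref{thm:oddsareeven} or \cref{cor:evensareodd} by parity bookkeeping; you need a strategy argument.

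The paper handles exactly this case with the perpendicular response. (Its proof indexes $\dps_n$ as $R^{n-2}_{n,n}=[n,n,n-1,\dots,2]$, so its ``even $n$'' case is your ``odd $n$'' case.) Suppose the first player removes row $i$ of $[m,m,m-1,\dots,2]$. Removing column $\lambda_i$ of the result shortens exactly the remaining rows of length at least $\lambda_i$. Checking $i\in\{1,2\}$, $3\le i\le m-1$, and $i=m$ separately, you always obtain $[m-1,m-1,m-2,\dots,2]$. By self-conjugacy the same holds for column moves. Since $m-1\ge 3$ is odd, this is the rank-$0$ $\EAO$ partition you already showed to be in $\p$ in your padded-staircase paragraph. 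Adding this step completes the proof.
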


\begin{proof}
We prove each of the claims, treating the parities separately for doubly padded staircases.
\begin{description}
    \item[\textbf{Staircases:}]
    \cref{thm:oddsareeven} (or \cref{cor:evensareodd}) implies $\s_n \in \p$ if $n$ is even. If $n$ is odd then $\s_n \in \n$ as we can reach the $\p$-position $\s_{n-1}$ in a single move.
    
    \item[\textbf{Odd doubly padded staircases:}] \cref{cor:evensareodd} implies $\dps_n \in \p$ for odd $n$. 
    
    \item[\textbf{Padded staircases:}] If $n$ is even, the even staircase $S_n$ can be reached in a single move. Notice that $\ps_1 = [1,1]$ which is in $\n$. If $n > 1$ is odd, the odd doubly padded staircase $\dps_n$ can be reached in a single move. 

    \item[\textbf{Even doubly padded staircases:}] 
    For a positive even integer $n$, whatever move the first player makes on $\dps_{n}$, the perpendicular response leads to an odd doubly padded staircase $\dps_{n-1}$.

\item[\textbf{Quadrated partitions:}] 
The result for the quadrated partitions follow from \cref{thm:oddsareeven} due to all (odd) columns and rows being of even length.
\qedhere
\end{description}
\end{proof}

% \begin{definition}
% Let $r, c$ be positive integers and let $0 \leq k < \min(r, c)$. A \emph{rectair} is either $[]$, or a partition $R^k_{r, c} = [c^{r-k}, c-1, \ldots, c-k]$.

% We say that a rectair $R^k_{r, c}$ is \emph{balanced} if it has even rank, $r$ and $c$ are at least $2$, and, when $r$ and $c$ are odd, $k < \min(r, c) -1$. The empty rectair $[ \, ]$ is balanced. 
% \end{definition}

% Rectairs generalize several families of partitions. 
% For example, for suitable values of $r, c, k$, we have \[R^0_{r, c} = R_{r, c}, \quad 
% R^{k-1}_{k, k} = \s_k, \quad 
% R^{k-1}_{k+1, k} = \ps_{k}, \quad 
% \mbox{and} \quad 
% R^{k-2}_{k, k} = \dps_{k}.
% \] 

% In each case above, the rectair is balanced if and only if it is in $\p$ by \cref{lem:rectangle,prop:pn-of-families}.
% Similar is true for any rectair $R^k_{r, c}$ with $k$ odd and both $r,c$ even (\cref{thm:oddsareeven}). 
% Next we show that the property of being balanced characterizes the rectairs in $\p$.
% We will show that a rectair is balanced if and only if it is losing.
% We show this by showing the following stronger result.
A rectair $R^k_{r, c}$ is in $\OAE$ (and is therefore in $\p$ due to \cref{thm:oddsareeven}) if and only if $r$ and $c$ are even while $k$ is odd. Similarly, in each of the familiar cases above, the partition being in $\p$ is inherently tied to the parity of $r,c$ and $k$ in some way. In the next section we give necessary and sufficient conditions for $R^k_{r, c}$ to be in $\p$. 

\subsection{Melds of balanced rectairs} \label{sec:melds}

Every partition can be represented uniquely as a meld of rectangles. We refer to the rectangles in this meld as \emph{terminal rectangles}. 

\begin{theorem} \label{thm:ltrl}
    The meld of losing rectangles is losing.
\end{theorem}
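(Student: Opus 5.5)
The plan is an explicit strategy-stealing-free pairing strategy for the previous player, preserving an invariant by induction on $|\lambda|$. By \cref{lem:rectangle}, the losing rectangles are exactly the trivial rectangle and the $R_{r,c}$ with $r,c\ge 2$ and $r+c$ even. Write a meld of such rectangles as
\[
\lambda = R_{r_1,c_1}\diamond R_{r_2,c_2}\diamond\cdots\diamond R_{r_t,c_t},
\qquad r_j,c_j\ge 2,\quad r_j+c_j \text{ even}.
\]
Discard trivial factors, since they are units of $\diamond$. I will call $\mathcal I$ the property of being a meld of this form, and show that every move from a position with property $\mathcal I$ admits a response back into $\mathcal I$. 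Since the empty partition has property $\mathcal I$, this gives the theorem by induction.

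\textbf{Block bookkeeping.} The first step is to record how moves act on the block structure. In the Young diagram of $\lambda$, the rows split into consecutive row-ranges of sizes $r_t,\dots,r_1$ from top to bottom. The columns split into column-ranges of sizes $c_1,\dots,c_t$ from left to right. Block $j$ owns $r_j$ rows and $c_j$ columns. Every row move deletes a row from exactly one row-range, and every column move deletes a column from exactly one column-range. If block $j$ keeps positive row and column counts, the result is again a meld of rectangles with $r_j$ (resp.\ $c_j$) decreased by $1$. I will also check the degenerate cases:
\begin{itemize}
\item If $r_j$ drops to $0$ for $j<t$, the columns of block $j$ acquire the same height as those of block $j+1$, so the two merge into a single block of size $(r_{j+1}, c_j+c_{j+1})$.
\item If $r_t$ drops to $0$, the columns of block $t$ disappear entirely.
\item The conjugate statements hold for columns, using $(\mu\diamond\lambda)'=\lambda'\diamond\mu'$.
\end{itemize}

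\textbf{The response.} By symmetry, suppose the opponent removes a row from block $j$. The response is to remove another row of the same block, i.e.\ the parallel response inside block $j$.
\begin{itemize}
\item If $r_j\ge 4$, the block becomes $(r_j-2,c_j)$. Both dimensions stay at least $2$ and the parity of $r_j+c_j$ is unchanged.
\item If $r_j=2$, the response removes block $j$'s rows entirely. Then $c_j$ is even, because $r_j+c_j$ is even.
 \begin{itemize}
 \item If $j<t$, block $j$'s columns merge into block $j+1$, giving $(r_{j+1},c_j+c_{j+1})$. This block still has both dimensions at least $2$, and its parity is $r_{j+1}+c_{j+1}$ plus the even number $c_j$, so it remains even.
 \item If $j=t$, block $t$ simply vanishes.
 \end{itemize}
\item If $r_j=3$, the response gives $(1,c_j)$, which breaks $\mathcal I$. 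Here I would instead use the perpendicular response, removing a column of block $j$ to get $(2,c_j-1)$. This requires $c_j-1\ge 2$, which holds since $c_j$ is odd and at least $2$, so $c_j\ge 3$.
\end{itemize}
In every case the resulting position again has property $\mathcal I$.

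\textbf{Main obstacle.} The delicate step is verifying the merging rule precisely in terms of the formula $\mu\diamond\lambda=[\mu_1+\lambda_1,\dots,\mu_1+\lambda_m,\mu_1,\dots,\mu_k]$. I need to confirm that deleting all rows of an inner block really produces the meld with the two adjacent column-ranges fused. I also need to confirm that the removed row and the response row may be chosen anywhere in the block's row-range, since all rows in a range have equal length. I would verify this by writing the partition explicitly as $[(c_1+\cdots+c_t)^{r_t},\dots,c_1^{r_1}]$ and checking the effect of deletions directly.
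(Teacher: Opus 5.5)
Your proof is correct and follows essentially the same route as the paper: induction on size, preserving the property of being a meld of losing rectangles, answering a row move in block $j$ within that same block, with the same merging analysis when $r_j=2$. The only difference is cosmetic: the paper uses the perpendicular (column) response for every odd $r_j$, giving an even-by-even block $(r_j-1,c_j-1)$, whereas you remove a second row of block $j$ whenever $r_j\ge 4$ and use the column response only for $r_j=3$; both choices preserve the invariant.
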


\begin{proof}
We proceed by induction on the size of the partition $\lambda$. 
The result is true if $k \le 1$ (see \cref{lem:rectangle}) so suppose $k > 1$, with
 $\lambda = R_{r_1, c_1} \diamond \cdots \diamond R_{r_k, c_k}$ where $r_j + c_j$ is even and $r_j, c_j \geq 2$ for for $j = 1, \cdots, k$. 

Without loss of generality suppose the first player removes a row, yielding a partition $\bar \lambda$. Then $\bar \lambda = R_{r_1, c_1} \diamond \cdots \diamond R_{r_j-1, c_j} \diamond \cdots \diamond R_{r_k, c_k}$ for some $j$. In other words, a row move reduces exactly one rectangle in the meld. 
We will show that if $r_j$ is odd, then the perpendicular response is winning, while otherwise the parallel response is winning.

If $r_j$ is odd (and therefore $r_j\ge 3$) then the perpendicular response removes one row and one column from the $j$th rectangle giving a partition $\bar{\bar \lambda}$ where the $j$th rectangle in the meld for $\lambda$ is replaced with one having both dimensions even. If $r_j$ is even the situation is slightly more subtle. After the parallel response we obtain
\begin{align*}
    \bar {\bar \lambda} &= R_{r_1, c_1} \diamond \cdots \diamond R_{r_j - 2, c_j} \diamond \cdots \diamond R_{r_k, c_k} &  \quad \text{ if }& r_j > 2 \\
    \bar {\bar \lambda} &= R_{r_1, c_1} \diamond \cdots \diamond R_{r_{j-1}, c_{j-1}}
     \diamond R_{r_{j+1}, c_{j+1}+c_j} \diamond \cdots \diamond R_{r_k, c_k} & \quad \text{ if }& r_j = 2 \text{ and } j < k \\
     \bar {\bar \lambda} &= R_{r_1, c_1} \diamond \cdots \diamond R_{r_{k-1}, c_{k-1}} & \quad \text{ if }& r_j = 2 \text{ and } j = k.
\end{align*}
% \begin{enumerate}
%     \item \label{it:1}If $r_j = 2$ and $j = 1$, removing the remaining row of $j$th rectangle gives
%     \[\bar {\bar \lambda} = R_{r_2, c_2} \diamond \cdots \diamond R_{c_k, r_k}.\]
%     \item If $r_j = 2$ and $j > 1$, the same response as in \cref{it:1} gives 
%     \[\bar {\bar \lambda} = R_{r_1, c_1} \diamond \cdots \diamond R_{r_{j-1}, c_{j-1} + c_j} \diamond R_{r_{j+1}, c_{j+1}} \diamond \cdots \diamond R_{c_k, r_k}.\]
%     \item If $r_j = 3$ then removing a column that reduces $R_{r_j, c_j}$ gives
%     \[\bar {\bar \lambda} = R_{r_1, c_1} \diamond \cdots \diamond R_{r_j - 1, c_j -1} \diamond \cdots \diamond R_{c_k, r_k}.\]
%     Since $c_j\ge 2$ is odd, the number of columns remaining in the $j$th rectangle of $\bar {\bar \lambda} $ is even and at least two.
%      \item Otherwise (i.e. if $r_j > 2$ is even, or if $r_j > 3$ is odd), then removing another row of the same rectangle gives 
%     \[\bar {\bar \lambda} = R_{r_1, c_1} \diamond \cdots \diamond R_{r_j - 2, c_j} \diamond \cdots \diamond R_{c_k, r_k}.\]
% \end{enumerate}
In each of these cases, $\bar {\bar \lambda} \in \p$ by induction. 
\end{proof}

It is tempting to imagine that the Sprague-Grundy value (or at least the $\p/\n$ status) of a meld of rectangles does not depend on the order of the rectangles in the meld. 
This is not the case; for example, $\sg([2,2,2] \diamond [4,4]) = 2$ but $\sg([4,4] \diamond [2,2,2]) = 1$ and $[2] \diamond [1] \diamond [2]$ is losing but $[2] \diamond [2]\diamond [1]$ is winning.

We now generalize \cref{thm:ltrl} by showing exactly which rectairs are losing. To this end, we need the following definition.

\begin{definition}
    We say that a rectair $R^k_{r, c}$ is \emph{balanced} if it has even rank, $r$ and $c$ are at least $2$, and when $r$ and $c$ are odd, $k < \min(r, c) -1$. The empty rectair $[ \, ]$ is balanced. 
\end{definition}
The partitions $R_{r,c}, S_k, \ps_k$ and $\dps_k$ are in $\p$ if and only the corresponding rectairs are balanced by \cref{lem:rectangle,prop:pn-of-families}. 

\thmrectairs*
\begin{proof}
    Let $\lambda = R^{k_1}_{r_1, c_1} \diamond \cdots \diamond R^{k_m}_{r_m, c_m}$. We proceed by induction on the size of $\lambda$. The result holds for $\lambda = [ \ ]$. 
    
    For the induction step, suppose without loss of generality that the first player removes the row of $\lambda$ corresponding to row $\ell$ of $\mu = R^{k_p}_{r_p, c_p}$ yielding $\bar \lambda = R^{k_1}_{r_1, c_1} \diamond \cdots \diamond \bar \mu \diamond \cdots \diamond R^{k_m}_{r_m, c_m}.$ 
    We show that the second player has either a parallel or perpendicular winning response on $\bar \lambda$ intersecting $\bar \mu$ yielding a partition $\bar{\bar \lambda}$. If $p < m$, then $R^{k_{p+1}}_{r_{p+1}, c_{p+1}}$ becomes a rectair $\eta = R^{k_{p+1}}_{r_{p+1}, c_{p+1}+t}$ for some nonnegative integer $t$ so that $\bar{\bar \lambda} = R^{k_1}_{r_1, c_1} \diamond \cdots \diamond \bar{\bar \mu} \diamond \eta \diamond \cdots \diamond R^{k_m}_{r_m, c_m}$. 
    To show that $\bar{\bar \lambda}$ is a meld of balanced rectairs, it suffices to show that $\bar{\bar \mu}$ is a meld of balanced rectairs and, if $p < m$, that $\eta$ is a balanced rectair.    
    
First, we treat small values of $k_p$. If $k_p = 0$ or $k_p = 1$, then the second player can play as if on a rectangle \cite{GKM22LCTR}. In particular, consider the subcases $r_p = 2$ and $r_p \geq 3$. 
\begin{itemize}
    \item If $r_p = 2$, then $c_p$ is even and the second player should make a parallel response, yielding $\bar{\bar \mu} = [ \ ]$. If $p < m$, then $\eta = R^{k_{p+1}}_{r_{p+1}, c_{p+1} + c_p}$. Both $\bar{\bar \mu}$ and $\eta$ are balanced, so $\bar{\bar \lambda}$ is a meld of balanced rectairs. 
    \item 
    If $r_p \geq 3$, then $c_p \neq 1$ has the same parity as $r_p$ and the second player should make the perpendicular response, which always yields the rectangle $\bar{\bar \mu} = R_{r_p-1, c_p - 1}$. Also, $\eta = R^{k_{p+1}}_{r_{p+1}, c_{p+1}}$ if $p < m$. 
    Both of these rectairs are balanced, so $\bar{\bar \lambda}$ is a meld of balanced rectairs. 
    %\item If $r_p > 3$, then the second player should make the perpendicular response. Then $\bar{\bar \mu} = R_{r_p-1, c_p-1}$ which is a balanced rectair, and $\bar{\bar \lambda}=R^{k_1}_{r , c_1} \diamond \cdots \diamond \bar{\bar \mu} \diamond \cdots \diamond R^{k_m}_{r_m, c_m}$ is a meld of balanced rectairs. 
\end{itemize}
If $c_p$ or $r_p$ has value $2$ or $3$, then $k_p = 0$ or $k_p = 1$, which we just addressed. Thus, we may assume that $r_p \geq 4$ and $c_p \geq 4$ and $k_p \geq 2$ for the remainder of the proof. 

We now treat large values of $k_p$, i.e., when $k_p = r_p-1$ or $k_p = c_p - 1$. 
In both cases $r_p$ and $c_p$ are even (since $\lambda$ is balanced), and the second player should make the parallel response. 
            %(i.e., row $\ell + (-1)^{\ell+1}$ of $\bar\mu$). 
            Without loss of generality, also suppose $\ell$ is odd so that $\bar{\bar \mu}$ is the result of removing rows $\ell$ and $\ell + 1$ from $\mu = R^{k_{p}}_{r_{p}, c_{p}}$.

% We now consider various cases depending on $p, k_p, r_p,$ and $c_p$, and which row of $R^{k_p}_{r_p, c_p}$ is removed. 
%    \begin{description}
        % \item[$r_p = 2$:] If $r_p = 2$, then $c_p$ is even. Whichever row is removed, the second player should remove the sole row of $\bar \mu$, yielding $\bar{\bar \mu} = [ \ ]$, which is balanced. If $p > 1$, then $\eta = R^{k_{p-1}}_{r_{p-1}, c_{p-1} + c_p}$, which is balanced. 
        % \item[$r_p = 3$:] If $r_p = 3$, then $c_p$ is odd and $k = 0$ or $k = 1$. Whichever row is removed, the second player should remove the last\footnote{Any column works; we choose the last one for specificity.} column of $\bar \mu$, yielding $\bar{\bar \mu} = [c-1, c-1]$. Either way, $\bar{\bar \mu}$ is balanced, as is $\eta = R^{k_{p-1}}_{r_{p-1}, c_{p-1}}$ if $p > 1$. 
 %       \item[$r_p \geq 4$:] 
% and consider the following subcases. 
        \begin{description}
            \item[$k_p = r_p-1$:] We distinguish  three subcases: 
            \begin{itemize}
                \item If $\ell = 1$ then $\bar{\bar \mu} = R^{r_p-3}_{r_p-2, c_p-2}$ and $\eta = R^{k_{p+1}}_{r_{p+1}, c_{p+1}+2}$ if $p < m$, both of which are balanced. 
                \item If $\ell = r_p-1$ then $\bar{\bar \mu} = R^{r_p-3}_{r_p-2, c_p}$ and $\eta = R^{k_{p+1}}_{r_{p+1}, c_{p+1}}$ if $p < m$, both of which are balanced. 
                \item If $3 \leq \ell \leq r_p-3$ then $\bar{\bar \mu} =  R^{r_p - \ell - 2}_{r_p - \ell - 1, c_p - \ell -1} \diamond R^{\ell-2}_{\ell-1, \ell + 1} $ is the meld of balanced rectairs and if $p < m$ then $\eta = R^{k_{p+1}}_{r_{p+1}, c_{p+1}}$ is balanced. 
            \end{itemize}
            \item[$k_p = c_p-1$:] 
            Since we already addressed the case $k_p=r_p-1$, we may assume that $k_p < r_p-1$. 
            In fact, observe that $r_p\ge c_p+2$ and that the first $r_p-k_p\ge 3$ rows are of the same length. 
            %(i.e., row $\ell + (-1)^{\ell+1}$ of $\bar\mu$). 
            Observe that $\ell \neq r_p - c_p$ due to parity. 
            \begin{itemize}
                \item If $\ell < r_p-c_p$ then $\bar{\bar \mu} = R^{k_p}_{r_p-2, c_p}$  which is balanced. 
                \item If $\ell = r_p-1$ then $\bar{\bar \mu} = R^{k_p-2}_{r_p-2, c_p}$ which is balanced. 
                \item If $r_p-c_p < \ell < r_p-1$ then 
$\bar{\bar \mu} = R^{r_p - \ell - 2}_{r_p - \ell - 1, r_p - \ell -1} \diamond R^{k_p - r_p+\ell-1}_{\ell-1, c_p-r_p+\ell + 1}  $ is the meld of balanced rectairs. 
            \end{itemize}
            In each case notice that if $p < m$ then $\eta=R^{k_{p+1}}_{r_{p+1},c_{p+1}}$, which is balanced.

\end{description}
            Having treated the extreme cases $k_p \in \{0, 1, \min\{r_p, c_p\}-1\}$, we turn our attention to the cases when $1 < k_p < \min\{r_p, c_p\}-1$. 
            Recall that $r_p \geq 4$ and $c_p \geq 4$ and $k_p \geq 2$. 
            Regardless of $\ell$, the perpendicular response gives $\bar{\bar \mu} = R^{k_p-1}_{r_p - 1, c_p - 1}$ and $\eta = R^{k_{p+1}}_{r_{p+1}, c_{p+1}}$ if $p < m$, both of which are balanced. 
\end{proof}

\begin{cor}
    A rectair is losing if and only if it is balanced. 
\end{cor}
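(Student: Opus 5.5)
The "if" direction is immediate from \cref{thm:rectairs}: a balanced rectair is a meld of one balanced rectair (or is $[\ ]$), hence losing. The work is in the converse. For an unbalanced rectair $\lambda = R^k_{r,c}$ I will exhibit a single move to a balanced rectair, or to some other position already known to be losing. By conjugation symmetry ($(R^k_{r,c})' = R^k_{c,r}$), I may assume $r \le c$ throughout. The converse then splits according to which condition of balancedness fails.

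\textbf{Small dimensions.} If $r = 1$, then $\lambda$ is a row, which is winning. The same holds for columns when $c=1$.

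\textbf{Odd rank, with $r,c \ge 2$.} The natural move removes the first column or the first row, so as to shorten the longer dimension by one while keeping the rectair shape. Removing the first row gives $R^{k'}_{r-1,c}$ with $k'=\min(k, r-2)$: if $k<r-1$ the first row is one of the full rows and $k$ is kept, and otherwise the shape becomes $R^{k-1}_{r-1,c-1}$, which is not what I want. So I will instead use the first column. It maps $R^k_{r,c}$ to $R^{k}_{r,c-1}$ when $k<r-1$ (every row loses one cell), and to $R^{k-1}_{r-1,c-1}$ when $k=r-1$.

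Since $r \le c$ and the rank is odd, $c-1 \ge r \ge 2$ and the new rank is even. I then need the result to be balanced, which requires that when $r$ and $c-1$ are both odd we have $k < r-1$. If this fails, I instead remove the last column, i.e.\ a column of length $r-k$. This yields $R^{k-1}$-type shapes, and I will check parities case by case. This step is the main obstacle: the extreme case $k=r-1$ with $r$ odd needs care, and I expect to fall back on results from earlier in the paper, such as that $\ps_n$ reaches $\dps_n$ or $\s_{n-1}$.

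\textbf{Even rank, $r,c$ odd, $k=\min(r,c)-1=r-1$.} Here $\lambda = [c, c-1, \dots, c-r+1]$, a staircase-like shape with $c-r$ even. Removing the last row gives $R^{r-2}_{r-1,c}$, which has odd rank and is no good. Removing the first column gives $R^{r-2}_{r-1,c-1}$, which has even rank and even dimensions $r-1, c-1$. If $r-1 \ge 2$ this is balanced, hence losing. If $r=1$ we are in the row case already handled. This settles the third case.

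In each case I exhibit an explicit move, and then verify balancedness of the image against the definition. The odd-rank case with parity exceptions is the step I expect to require the most checking.
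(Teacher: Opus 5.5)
Your case split (rows and columns; odd rank; even rank with $r,c$ odd and $k=r-1$) is exhaustive. The ``if'' direction and the row/column case are fine. The problem is the computation that drives the other two cases. When $r<c$, every row of $R^k_{r,c}$ has length at least $c-k\ge c-r+1\ge 2$, so deleting the first column never deletes a row. It always gives $R^k_{r,c-1}$, including when $k=r-1$. The shapes $R^{k-1}_{r-1,c-1}$ and $R^{r-2}_{r-1,c-1}$ that you attribute to this move arise only when $c=r$.

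This breaks your third case as soon as $r<c$. From $[c,c-1,\dots,c-r+1]$ the first-column move produces $R^{r-1}_{r,c-1}$, which has odd rank. For example, $[5,4,3]\to[4,3,2]$, and $[4,3,2]\in\n$ by \cref{thm:3parts}. The balanced target you have in mind, $R^{r-2}_{r-1,c-1}=[c-1,\dots,c-r+1]$, is reached by deleting the \emph{first row}. Both its dimensions are even and at least $2$, so it is balanced, and for $r=c$ this is just $\s_r\to\s_{r-1}$. For $[5,4,3]$ the first-row move is in fact the only winning move.

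In the odd-rank case you correctly isolate the bad subcase ($r$ odd, $c$ even, $k=r-1$) but leave it open. Falling back on $\ps_n$ would only cover $c=r+1$, since the conjugate of $\ps_r$ is $R^{r-1}_{r,r+1}$. No fallback is needed. Deleting the last column, which has length $r-k$, sends $R^k_{r,c}$ to $R^{\max(k-1,0)}_{r,c-1}$. This has even rank, both dimensions at least $2$, and index $\max(k-1,0)<r-1$, so it is balanced for every odd-rank rectair with $2\le r<c$. That uniform move is exactly the paper's argument, and it makes your first-column detour unnecessary.

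Be aware that the paper's one-line proof applies the last-column move to every unbalanced rectair with $r<c$. That fails in your third case: $[5,4,3]\to[4,4,3]=R^1_{3,4}$ has odd rank and is in $\n$. It also fails, trivially, for rows. A complete proof therefore needs three moves:
\begin{itemize}
\item the last-column move for odd rank;
\item the first-row move for $r,c$ odd with $k=r-1$, which includes the odd staircases;
\item removing the whole partition for rows and columns.
\end{itemize}
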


\begin{proof}
    We show that if $R_{r,c}^k$ is not balanced, then it is winning. 
    Without loss of generality assume $r \le c$. 
    \cref{prop:pn-of-families} settles the case of odd stairs, so assume $r<c$.
    In this case, the removal of the last column yields a balanced rectair.
\end{proof}

\section{Conway-Gurvich-Ho Classification} \label{sec:CG}

In this section, we study the placement of \crim and its restrictions to various types of partitions in the Conway-Gurvich-Ho classification scheme. These games are shown in \cref{fig:cgh} along with some games that have been studied elsewhere.

\begin{figure}[h!]
    \centering
     \includegraphics[scale=.9]{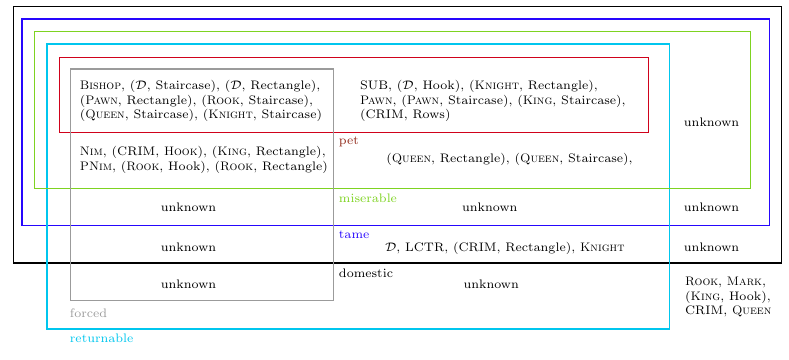}
    \caption{
Conway-Gurvich-Ho Classification of some games. The word ``unknown'' appears in those regions of the Conway-Gurvich-Ho classification scheme in which only contrived combinatorial games are known to appear.
For definitions of the mentioned games we refer the reader to \cite{bouton1901nim,Gottlieb2024LCTR,berlekamp2011puzzles,gottlieb2025impartialchessintegerpartitions}.}
  \label{fig:cgh}
\end{figure}
We begin by considering \crim restricted to the class of row partitions, i.e., those of the form $[c]$ or for nonnegative integers $c$ and $r$. The corresponding results hold for column partitions by conjugate invariance. 

\begin{theorem} \label{thm:cgpairrow}
Let $c$ be a nonnegative integer. Then \[\cp([c]) = \begin{cases}
    (1,0) & \mbox{ if $c$ is odd}\\
    (2,2) & \mbox{ if $c$ is even}.
\end{cases}\] 
\end{theorem}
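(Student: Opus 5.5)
The plan is a direct induction on $c$, computed straight from the move structure of a one-row partition. From $[c]$ with $c \ge 1$ there are only two kinds of move. The single row move deletes the whole row and yields the terminal position $[\ ]$. Every column move deletes one cell, so all $c$ column moves yield $[c-1]$. Hence the option set of $[c]$ is exactly $\{[\ ], [c-1]\}$, which collapses to $\{[\ ]\}$ when $c=1$.

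By definition the terminal position $[\ ]$ has $\cp([\ ]) = (0,1)$. Taken literally, the case $c = 0$ of the statement would assign $[\ ]$ the pair $(2,2)$, which contradicts this definition. So I will read the statement as intended for $c \ge 1$, with $[0] = [\ ]$ serving only as the terminal base of the recursion, and I will remark on this convention explicitly.

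The base case is $c=1$. The only option of $[1]$ is $[\ ]$, so $\sg([1]) = \mex\{0\} = 1$ and $\mathcal G^-([1]) = \mex\{1\} = 0$, giving $\cp([1]) = (1,0)$.

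For the induction step, take $c \ge 2$. The options of $[c]$ are $[\ ]$ with pair $(0,1)$ and $[c-1]$, whose pair is known by the induction hypothesis.
\begin{itemize}
\item If $c$ is even, then $c-1$ is odd and $\cp([c-1]) = (1,0)$. The normal values of the options are $\{0,1\}$ and the misère values are $\{1,0\}$. Hence $\cp([c]) = (\mex\{0,1\}, \mex\{1,0\}) = (2,2)$.
\item If $c \ge 3$ is odd, then $c-1$ is even and at least $2$, so $\cp([c-1]) = (2,2)$. The normal values are $\{0,2\}$ and the misère values are $\{1,2\}$. Hence $\cp([c]) = (1,0)$.
\end{itemize}
This closes the induction.

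There is no real obstacle here. The only points needing care are the following.
\begin{itemize}
\item All $c$ column moves on a single row lead to the same partition $[c-1]$, regardless of which column is removed.
\item The case $c=1$ must be treated separately, because there the row move and the column move coincide.
\item The terminal position $[\ ]$ contributes the normal value $0$ and the misère value $1$ to every mex computation.
\end{itemize}
The corresponding statement for columns $[1^c]$ follows by conjugate invariance, as noted just before the theorem.
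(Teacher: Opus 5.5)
Your proposal is correct and follows essentially the same route as the paper: induction on $c$ with option set $\{[\,],[c-1]\}$ and two direct mex computations. Your remark about $c=0$ is also correct: the stated formula would give $(2,2)$ for $[\,]$, yet both the definition and the paper's own proof use $\cp([\,])=(0,1)$, so the paper's claim that ``the result holds if $c=0$'' is an oversight and the statement should be read for $c\ge 1$.
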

\begin{proof}
The result holds if $c = 0$ or $c = 1$. Now suppose $c > 1$. There are two kinds of moves that can be made on $[c]$. The first results in the empty partition, with $\cp([\,]) = (0, 1)$. The second results in the partition $[c-1]$. If $c$ is even, then by induction we have $\cp([c-1]) = (1, 0)$, so $\cp([c]) = (2, 2)$. If $c$ is odd, then by induction $\cp([c-1]) = (2, 2)$, so $\cp([c])$ is $(1, 0)$. 
\end{proof}

\cref{thm:rows} shows that $\crim$ on rows is in a region of the Conway-Gurvich classification that is occupied by other games \cite{gottlieb2025impartialchessintegerpartitions}. 

\begin{theorem} \label{thm:rows}
$\crim$ on rows is pet and returnable but not forced.     
\end{theorem}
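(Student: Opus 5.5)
The plan is to read everything off \cref{thm:cgpairrow}. That result gives the Conway pair of every position in \crim on rows. The positions are $[c]$ for $c \geq 0$, with $[\,]$ the terminal position and $\cp([\,]) = (0,1)$. By \cref{thm:cgpairrow}, the remaining positions have $\cp([c]) = (1,0)$ for odd $c$ and $(2,2)$ for even $c \geq 2$. Each of the three claimed properties then becomes a short check of the definitions in \cref{D.DTP} against this list.

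\textbf{Pet.} Every position is a $(0,1)$-position, a $(1,0)$-position, or a $(k,k)$-position with $k = 2 \geq 2$. This is exactly the definition of pet, so this part needs no further argument.

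\textbf{Returnable.} The definition involves moves from $(0,1)$- and $(1,0)$-positions to non-terminal positions, so I will handle each type.
\begin{itemize}
\item The only $(0,1)$-position is $[\,]$, which has no moves, so the condition holds vacuously.
\item The $(1,0)$-positions are $[c]$ with $c$ odd. The moves from $[c]$ go to $[\,]$ or to $[c-1]$. Only $[c-1]$ counts, and only when $c \geq 3$, since for $c = 1$ the position $[c-1]$ is terminal.
\item For $c \geq 3$ odd, $[c-1]$ is non-terminal with $c-1$ even and at least $2$. It has a move to $[c-2]$, where $c-2$ is odd, so $[c-2]$ is a $(1,0)$-position, as returnability requires.
\end{itemize}
The one subtle point is the reading of the definition. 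I will take it to require that a move from a $(1,0)$-position is answered by a move back to some $(1,0)$-position, and similarly for $(0,1)$.

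\textbf{Not forced.} It suffices to exhibit a single move out of a $(1,0)$-position that does not land on a $(0,1)$-position. Take $[3]$, which is a $(1,0)$-position, and remove one cell to reach $[2]$. Since $\cp([2]) = (2,2)$, this move violates the forced condition.

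I expect no real obstacle here. The only care needed is in the returnable check: excluding the terminal target $[\,]$, and handling the edge case $c = 1$, whose only move goes to a terminal position, so that it holds vacuously.
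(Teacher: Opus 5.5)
Your proposal is correct and follows the paper's proof essentially verbatim: pet is read off \cref{thm:cgpairrow}, non-forcedness is witnessed by the move $[3]\to[2]$, and returnability comes from answering $[2t+1]\to[2t]$ with $[2t]\to[2t-1]$. Your explicit treatment of the edge cases is slightly more careful than the paper's, which asserts that every $(0,1)$- or $(1,0)$-position has the form $[2t+1]$ with $t$ positive; you instead note that the $(0,1)$-position $[\,]$ and the $(1,0)$-position $[1]$ satisfy returnability vacuously.
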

\begin{proof} 
That $\crim$ on rows is pet follows from \cref{thm:cgpairrow}. $\crim$ on rows is not forced, since $[3] \to [2]$ and $\cp([3]) = (1, 0)$ and $\cp([2]) = (2, 2)$. $\crim$ on rows is returnable, since the only such partitions with Conway pair $(0, 1)$ or $(1, 0)$ are of the form $[2t+1]$ for some positive integer $t$. Every move from $[2t+1]$ to a non-terminal position is a move to $[2t]$, and there is a move from $[2t]$ to $[2t-1]$ which has Conway pair $(1, 0)$. 
\end{proof}

Now we consider $\crim$ on hooks. 

\begin{theorem} \label{thm:cgpairshooks} 
Let $r$ and $c$ be positive integers. Then \[ \cp(\left[ c, 1^{r-1} \right]) = \begin{cases}
(1,0) & \mbox{ if $r = 1$ and $c$ is odd, or if $r$ is odd and $c = 1$ }\\     
(2,2) & \mbox{ if $r = 1$ and $c$ is even, or if $r$ is even and $c = 1$}\\ 
(0,1) & \mbox{ if $r$ and $c$ are both even}\\ 
(3,3) & \mbox{ if $r>1$ and $c>1$ have different parities}\\ 
(0,0) & \mbox{ if $r> 1$ and $c > 1$ are both odd.} 
\end{cases}\]
\end{theorem}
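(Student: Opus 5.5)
We argue by induction on $r+c$, computing both coordinates of the Conway pair directly from the Conway pairs of the options. The normal-play coordinate can also be cross-checked against \cref{lem:hook}.

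\textbf{Degenerate hooks.} If $r=1$ the hook is the row $[c]$, and \cref{thm:cgpairrow} gives exactly the first two lines of the statement. If $c=1$ the hook is the column $[1^r]$. Conjugation is an isomorphism of the game graph, so it preserves Conway pairs, and the column case follows from \cref{thm:cgpairrow} as well. Hence we may assume $r,c\ge 2$.

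\textbf{Options of a proper hook.} For $r,c\ge 2$, the options of $\lambda=[c,1^{r-1}]$ are exactly four positions:
\begin{itemize}
\item removing the first row gives the column $[1^{r-1}]$;
\item removing any other row gives $\Gamma_{r-1,c}$;
\item removing the first column gives the row $[c-1]$;
\item removing any other column gives $\Gamma_{r,c-1}$.
\end{itemize}
None of these is terminal, since $r,c\ge2$. The Conway pairs of the column and the row come from \cref{thm:cgpairrow} and conjugation: they are $(1,0)$ for odd length and $(2,2)$ for even length. The pairs of $\Gamma_{r-1,c}$ and $\Gamma_{r,c-1}$ come from the induction hypothesis. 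When $r-1=1$ or $c-1=1$, that hook degenerates to a row or column, which is already covered by the degenerate case. It then remains to take $\mex$ of each coordinate separately over the four options.

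\textbf{Case $r,c$ both even.}
\begin{itemize}
\item The column and the row both have odd length, so each has pair $(1,0)$.
\item $\Gamma_{r-1,c}$ has pair $(2,2)$ if $r=2$ (it is then the even row $[c]$), and $(3,3)$ otherwise. The same holds for $\Gamma_{r,c-1}$.
\end{itemize}
The normal values lie in $\{1,2,3\}$, so their $\mex$ is $0$. The misère values lie in $\{0,2,3\}$ and include $0$, so their $\mex$ is $1$. This gives $(0,1)$.

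\textbf{Case $r,c>1$ both odd.}
\begin{itemize}
\item The column and the row have even length, so each has pair $(2,2)$.
\item The hooks $\Gamma_{r-1,c}$ and $\Gamma_{r,c-1}$ have both dimensions larger than $1$ and of different parities, so each has pair $(3,3)$.
\end{itemize}
Both coordinates are $\mex\{2,3\}=0$, giving $(0,0)$.

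\textbf{Case of different parities.} By conjugation we may take $r$ even and $c\ge3$ odd.
\begin{itemize}
\item The column $[1^{r-1}]$ has odd length, so its pair is $(1,0)$.
\item The row $[c-1]$ has even length, so its pair is $(2,2)$.
\item $\Gamma_{r,c-1}$ has both dimensions even and at least $2$, so its pair is $(0,1)$.
\item $\Gamma_{r-1,c}$ is the odd row $[c]$ with pair $(1,0)$ if $r=2$, and otherwise has pair $(0,0)$.
\end{itemize}
In either subcase the normal values cover $\{0,1,2\}$ and the misère values cover $\{0,1,2\}$, with no option having value $3$ in either coordinate. This gives $(3,3)$.

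The main point needing care is the boundary bookkeeping. We must make sure that when $r-1=1$ or $c-1=1$ the relevant option is read off from \cref{thm:cgpairrow} rather than from the hook formula. We must also make sure the options listed are exhaustive: the first row and first column are the only moves that do not leave a hook. The base case $r=c=2$ is covered automatically by this bookkeeping.
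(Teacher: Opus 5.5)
Your proof is correct and follows the same route as the paper. Both argue by induction on $r+c$, use the same four options $[1^{r-1}]$, $\Gamma_{r-1,c}$, $[c-1]$, $\Gamma_{r,c-1}$, and take a coordinatewise $\mex$, reading degenerate options off \cref{thm:cgpairrow} and using conjugation for symmetry. The only difference is organization: you split by parity class and absorb the boundary cases $r=2$ or $c=2$ (including $[2,1]$) into each case, whereas the paper handles $[c,1]$ separately before treating $r,c>2$.
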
 

\begin{proof} 
We proceed by induction on $r+c$. If $r=1$ or $c=1$, the result holds by \cref{thm:cgpairrow}. Observe that $\cp([2, 1]) = (0, 1)$. If $c > 2$, then the partitions reachable in a single move from $[c, 1]$ are $[c]$, $[1]$, $[c-1]$, and $[c-1, 1]$. Thus 
\begin{align*}
    \cp([c, 1]) &= \mex(\cp([c]), \cp([1]), \cp([c-1]), \cp([c-1,1])) \\
    &= \begin{cases}
        \mex((2,2), (1, 0), (1, 0), (3,3)) = (0,1) & \mbox{if $c$ is even} \\
        \mex((1,0), (1, 0), (2,2), (0,1)) = (3, 3) & \mbox{if $c$ is odd.} \\
    \end{cases}
\end{align*}
The result for $[2, 1^{r-1}]$ follows by conjugate invariance. 

If $r > 2$ and $c > 2$, then the partitions reachable in one move from $[c, 1^{r-1}]$ are $[1^{r-1}]$, $[c, 1^{r-2}]$, $[c-1]$, and $[c-1, 1^{r-1}]$. Thus \begin{align*}
    \cp([c, 1^{r-1}]) &= \mex(\cp([1^{r-1}]), \cp([c, 1^{r-2}]), \cp([c-1]), \cp([c-1, 1^{r-1}]))\\
    &= \begin{cases}
        \mex((1,0), (3,3), (1,0), (3,3)) = (0, 1) & \mbox{if $r$ and $c$ are even} \\
        \mex((2,2), (3,3), (2,2), (3,3)) = (0, 0) & \mbox{if $r$ and $c$ are odd}\\
        \mex((1,0), (0,0), (2,2), (0,1)) = (3, 3) & \mbox{if $r$ and $c$ have different parities.} 
    \end{cases}
\end{align*} 
\end{proof}

\cref{thm:hooks} shows that $\crim$ on hooks is in a region of the Conway-Gurvich classification that is occupied by other games \cite{gottlieb2025impartialchessintegerpartitions}. 

\begin{restatable}{theorem}{thmhooks} \label{thm:hooks}
    $\crim$ on hooks is returnable but not forced and miserable but not pet. 
\end{restatable}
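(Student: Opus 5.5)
The plan is to read everything off the complete table of Conway pairs for hooks in \cref{thm:cgpairshooks}, together with the list of moves available from a hook. From $[c,1^{r-1}]$ with $r,c>1$ there are exactly four options: $[1^{r-1}]$, $[c,1^{r-2}]$, $[c-1]$ and $[c-1,1^{r-1}]$. When $r=1$ or $c=1$ we have a row or column, whose options are the empty partition and the row or column one shorter. The four properties are then checked one at a time.

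\emph{Not pet.} By \cref{thm:cgpairshooks}, if $r,c>1$ are both odd then $\cp([c,1^{r-1}])=(0,0)$, for example $\cp([3,1,1])=(0,0)$. This is a symmetric position with $k=0<2$, so the game is not pet.

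\emph{Not forced.} Take the swap position $[2,1]$, which has pair $(0,1)$. It has the move $[2,1]\to[2]$, and $\cp([2])=(2,2)$, which is not $(1,0)$. So the game is not forced.

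\emph{Returnable.} First list the swap hooks. The $(1,0)$-positions are the odd rows and odd columns. The $(0,1)$-positions are the hooks with $r,c$ both even, together with the empty partition.
\begin{itemize}
\item From an odd row $[c]$, every nonterminal option is $[c-1]$. From $[c-1]$ there is a move to $[c-2]$, which has pair $(1,0)$ when $c\ge3$. Odd columns are handled by conjugation.
\item From $[c,1^{r-1}]$ with $r,c$ even, each nonterminal option must return to a $(0,1)$-position.
\begin{itemize}
\item Option $[1^{r-1}]$: remove everything to reach $[\,]$, which has pair $(0,1)$.
\item Option $[c-1]$: likewise remove everything to reach $[\,]$.
\item Option $[c,1^{r-2}]$: if $r-1>1$, delete the first column to get $[c-1,1^{r-2}]$, and then $(c-1,r-1)$ are both odd, which has the wrong pair. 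Instead delete a cell of the arm, giving $[c-1,1^{r-2}]$; this is also wrong. So use the option listing. From $[c,1^{r-2}]$ (with $r-1$ odd and $c$ even) one option is $[1^{r-2}]$, an even column, which is not swap, and another is $[c]$, which is also not swap. We want a $(0,1)$-position, so aim for $[\,]$, or for a hook with both dimensions even. Removing the arm row gives $[1^{r-2}]$, not $[\,]$. Shrinking the leg gives $[c,1^{r-3}]$, which has $r-2$ rows and $c$ columns, both even. That is a $(0,1)$-position when $r-2\ge2$. When $r=2$ the option is $[c]$, from which removing the row gives $[\,]$.
\item Option $[c-1,1^{r-1}]$: symmetrically, move to $[c-2,1^{r-1}]$, or reach $[\,]$ from $[1^{r}]$ when $c=2$.
\end{itemize}
\end{itemize}

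\emph{Miserable.} Go through each non-swap class and check that conditions (ii) or (iii) of the definition hold.
\begin{itemize}
\item $(2,2)$ even rows: the options are $[\,]$, which is $(0,1)$, and $[c-1]$, which is $(1,0)$. Both kinds are reachable, so (iii) holds.
\item $(0,0)$ hooks with $r,c$ odd: the options $[1^{r-1}]$ and $[c-1]$ are $(2,2)$, and the other two are $(3,3)$. No swap position is reachable, so (ii) holds.
\item $(3,3)$ hooks: this is the main case to check. With $r$ odd and $c$ even (both $>1$), the options are $[1^{r-1}]$, which is $(2,2)$; $[c,1^{r-2}]$, which is $(0,1)$; $[c-1]$, which is $(1,0)$; and $[c-1,1^{r-1}]$, which is $(0,0)$. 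Both a $(0,1)$- and a $(1,0)$-position are reachable, so (iii) holds.
\item The boundary $(3,3)$ cases $[c,1]$ and $[2,1^{r-1}]$ are verified with the explicit option lists in the proof of \cref{thm:cgpairshooks}, and they give (iii) in the same way.
\end{itemize}
The opposite parity case for $(3,3)$ hooks follows by conjugation.

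The main obstacle is bookkeeping: getting the returnable case right when an option is itself non-swap and one has to locate the correct follow-up move, and making sure the small boundary cases $r=2$ or $c=2$ are covered.
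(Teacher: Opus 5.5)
Your proposal is correct and follows the paper's route: read the Conway pairs off \cref{thm:cgpairshooks} and check each property on the four options $[1^{r-1}]$, $[c,1^{r-2}]$, $[c-1]$, $[c-1,1^{r-1}]$. Your witness $[2,1]\to[2]$ and your explicit option check for the $(0,0)$ hooks are harmless variants of the paper's $[3]\to[2]$ and its direct appeal to the pair $(0,0)$. In a write-up, drop the abandoned exploratory lines in the $[c,1^{r-2}]$ case (deleting the first column of $[c,1^{r-2}]$ gives $[c-1]$, not $[c-1,1^{r-2}]$). Also note that for $c=2$ the option $[c-1,1^{r-1}]=[1^r]$ of a $(3,3)$ hook is $(1,0)$ rather than $(0,0)$, which does not affect condition (iii).
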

\begin{proof}
    Within this proof we refer to $c$ as the \emph{arm} and $r$ as the \emph{leg} of the hook partition $\Gamma_{r, c}$.
    
    $\crim$ on hooks is not forced for the same reason that $\crim$ on rows is not. To see that $\crim$ on hooks is returnable, note that the only positions with Conway pair $(1, 0)$ are rows so this case can be treated as in \cref{thm:rows}. The only positions with Conway pair $(0, 1)$ are hooks with even-length arm and leg. Any move to a row or column partition can be answered with a move to $[]$, which has Conway pair $(0, 1)$. Without loss of generality we need only consider moves $[c, 1^{r-1}] \to [c-1, 1^{r-1}]$ where $r \geq 2$ and $c > 2$.  The position $[c-2, 1^{r-1}]$ has Conway pair $(1, 0)$ and is reachable in one move from $[c-1, 1^{r-1}]$. Thus, $\crim$ on hooks is returnable. 
    
    That $\crim$ on hooks is not pet follows from \cref{thm:cgpairshooks}. To see that $\crim$ on hooks is miserable, we consider partitions with Conway pairs different from $(1, 0)$ and $(0, 1)$ individually. 
    First consider even rows. These admit moves to $[\, ]$ and to an odd row, and these have Conway pairs $(0, 1)$ and $(1, 0)$ respectively. Even rows therefore satisfy the third condition in the definition of miserable. 

    Now suppose $r > 1$ and $c > 1$ have different parities. Let $c > 1$ be odd. From $[c, 1]$ there are moves to $[c]$ and $[c-1,1]$ which have Conway pairs $(1, 0)$ and $(0,1)$ respectively. If $r > 2$ then from $[c, 1^{r-1}]$ there are moves to $[c-1, 1^{r-1}]$ and $[1^{r-2}]$ which have Conway pairs $(0, 1)$ and $(1, 0)$ respectively. Either way, hooks with arms and legs of different parities satisfy the third condition in the definition of miserable.

    Finally, suppose $r>1$ and $c>1$ are both odd. Because the Conway pair of such a partition is $(0, 0)$, there can be a move to a partition with Conway pair neither $(0,1)$ nor $(1, 0)$. Therefore, the second condition in the definition of miserable is satisfied. We conclude that $\crim$ on hooks is miserable. 
\end{proof}

We now consider $\crim$ on rectangles. 

\begin{theorem} \label{thm:cgpairrects}
 Let $r$ and $c$ be positive integers. Then \[\cp([c^r])=\begin{cases}
(1, 0) & \mbox{ if $r = 1$ and $c$ is odd or $r$ is odd and $c=1$} \\
(2, 2) & \mbox{ if $r = 1$ and $c$ is even or $r$ is even and $c = 1$} \\
(0, 0) & \mbox{ if $r > 1$ and $c > 1$ have the same parity} \\
(2, 1) & \mbox{ if $r = 2$ and $c > 1$ is odd or $r > 1$ is odd and $c = 2$} \\ 
(1, 1) & \mbox{ if $r > 2$ and $c > 2$ have different parity}.
\end{cases}\]
\end{theorem}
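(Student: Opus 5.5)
The plan is a direct induction on $r+c$, using the fact that a rectangle has very few distinct options. In $[c^r]$ all rows have length $c$ and all columns have length $r$. Hence every row move yields $[c^{r-1}]$ and every column move yields $[(c-1)^r]$, whichever row or column is removed; here $[c^0]=[(0)^r]=[\,]$. Therefore
\[
\cp([c^r]) = \mex\bigl(\cp([c^{r-1}]),\, \cp([(c-1)^r])\bigr),
\]
where the mex is taken separately in each coordinate. In the misère coordinate the empty partition contributes $\mathcal G^-([\,])=1$, as usual.

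\textbf{Base and boundary cases.} The case $r=1$ is \cref{thm:cgpairrow}, and the case $c=1$ follows from it by conjugate invariance. This settles the first two lines of the statement. So from now on $r,c\ge 2$, and then both options $[c^{r-1}]$ and $[(c-1)^r]$ are non-empty and are covered by the induction hypothesis.

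\textbf{Inductive step.} This is a finite case check.
\begin{itemize}
\item $r=c=2$: both options $[2]$ and $[1,1]$ have pair $(2,2)$, so the mex is $(0,0)$.
\item $r=2$, $c\ge3$ odd: the options $[c]$ and $[(c-1)^2]$ have pairs $(1,0)$ and $(0,0)$, giving $(2,1)$.
\item $r=2$, $c\ge4$ even: the options have pairs $(2,2)$ and $(2,1)$, giving $(0,0)$.
\item $c=2$: this is symmetric to the case $r=2$. For example, with $r\ge3$ odd the options $[2^{r-1}]$ and $[1^r]$ have pairs $(0,0)$ and $(1,0)$, giving $(2,1)$.
\item $r,c\ge3$ of the same parity: both options have dimensions $>1$ of different parity, so by induction each has pair $(1,1)$ or $(2,1)$. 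The latter occurs when a dimension drops to $2$. In both coordinates $0$ is missing, so the mex is $(0,0)$.
\item $r,c\ge3$ of different parities: both options have both dimensions $\ge2$ and of the same parity, so both are $(0,0)$ and the mex is $(1,1)$.
\end{itemize}

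The only delicate point is bookkeeping rather than mathematics. When one dimension of an option drops to $2$ or to $1$, we must use the correct line of the statement for that option. In particular, the case $r=3$ with $c$ odd produces the option $[c^2]$, whose pair is $(2,1)$ rather than $(1,1)$. So I would carefully tabulate which line of the statement governs each option in each case, and confirm that in every case the missing value $0$ (respectively $1$) in each coordinate is as claimed.
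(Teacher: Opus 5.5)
Your proof is correct and follows the same route as the paper. Both arguments use the fact that $[c^r]$ has only the two options $[c^{r-1}]$ and $[(c-1)^r]$, settle $r=1$ or $c=1$ via \cref{thm:cgpairrow} and conjugation, and then compute the coordinatewise mex for $r=2$ (with $c=2$ by symmetry) and for $r,c>2$. Your observation that in the same-parity case an option such as $[c^2]$ (when $r=3$) has pair $(2,1)$ rather than $(1,1)$ is more careful than the paper's displayed $\mathrm{mex}((1,1),(1,1))$, although the conclusion $(0,0)$ is unaffected.
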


\begin{proof}
    If $r=1$ or $c=1$, the result holds by \cref{thm:cgpairrow}. Note that $\cp([2,2]) = (0,0)$. If $c > 2$ then the partitions reachable in one move from $[c^2]$ are $[c]$ and $[(c-1)^2]$. Thus \begin{align*}
        \cp([c^2]) &= \mex(\cp([c]), \cp([(c-1)^2])) \\
        &= \begin{cases}
        \mex((2,2), (2,1)) = (0, 0) & \mbox{ if $c$ is even} \\
        \mex((1,0), (0,0)) = (2, 1) & \mbox{ if $c$ is odd.}
    \end{cases} 
    \end{align*}
The result for $(2^r)$ with $r>2$ follows from conjugate invariance. 

    Now suppose $r > 2$ and $c > 2$. The partitions reachable from $[c^r]$ are $[c^{r-1}]$ and $[(c-1)^r]$, so 
    \begin{align*}
        \cp([c^r]) &= \mex(\cp([c^{r-1}]), \cp([(c-1)^r]) \\
        &= \begin{cases}
            \mex((1,1), (1,1)) = (0, 0) & \mbox{ if $r$ and $c$ have the same parity}\\
            \mex((0, 0), (0, 0)) = (1, 1) & \mbox{ if $r$ and $c$ have different parities.} 
        \end{cases}
    \end{align*}
\end{proof}

\cref{thm:rects} shows that $\crim$ on rectangles belongs to a region of the Conway-Gurvich classification that is occupied by other games \cite{gottlieb2025impartialchessintegerpartitions}.

\begin{restatable}{theorem}{thmrects} \label{thm:rects}
    $\crim$ on rectangles is domestic but not tame and returnable but not forced. 
\end{restatable}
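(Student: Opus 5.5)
The four claims follow from the table of Conway pairs in \cref{thm:cgpairrects}, checked one at a time against \cref{D.DTP}. All positions in this restricted game are rectangles $[c^r]$, and the moves are $[c^r]\to[c^{r-1}]$ and $[c^r]\to[(c-1)^r]$. The possible Conway pairs are therefore $(1,0)$, $(2,2)$, $(0,0)$, $(2,1)$ and $(1,1)$, together with $(0,1)$ for the empty rectangle $[\,]$.

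\emph{Domestic.} None of these pairs has the form $(0,k)$ or $(k,0)$ with $k\ge 2$, so the game is domestic.

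\emph{Not tame.} The pair $(2,1)$ is neither swap nor symmetric. It occurs, for example, at $[3,3]$, so the game is not tame.

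\emph{Not forced.} I would reuse the witness from \cref{thm:rows}: $[3]\to[2]$ moves from a $(1,0)$-position to a $(2,2)$-position, and both positions are rectangles. Hence the game is not forced.

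\emph{Returnable.} The swap positions are few.
\begin{itemize}
\item The only $(0,1)$-position is $[\,]$, which has no moves, so that half of the condition holds vacuously.
\item The $(1,0)$-positions are the odd rows $[2t+1]$ and, by conjugation, the odd columns $[1^{2t+1}]$.
\end{itemize}
Take a move from $[2t+1]$ with $t\ge1$ to a non-terminal position. The only options are $[2t]$ (removing a column) or $[\,]$ (removing the single row), and $[\,]$ is terminal, so the target is $[2t]$. From $[2t]$ there is a move to $[2t-1]$, which is a $(1,0)$-position, exactly as in \cref{thm:rows}. The case $[1]$ is vacuous, since its only move leads to $[\,]$. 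Odd columns follow by conjugate invariance.

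The only step needing care is confirming that this list of swap positions is complete for rectangles, i.e. that no nontrivial rectangle with $r,c\ge2$ is swap. This can be read directly off \cref{thm:cgpairrects}, where such rectangles have pairs $(0,0)$, $(2,1)$ or $(1,1)$, so I do not expect a real obstacle.
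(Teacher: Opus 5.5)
Your proposal is correct and takes essentially the same route as the paper. You read domestic and not tame off the Conway-pair table of \cref{thm:cgpairrects}, and you inherit not forced and returnable from the row argument of \cref{thm:rows}. Your explicit check that the only swap rectangles are $[\,]$, the odd rows and the odd columns is exactly the detail the paper leaves implicit when it cites \cref{thm:rows}.
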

\begin{proof}
    It follows immediately from \cref{thm:cgpairrects} that $\crim$ on rectangles is domestic but not tame. It is returnable and not forced by \cref{thm:rows}. 
\end{proof}

\thmCGHmain*
\begin{proof}
To see that $\crim$ is not domestic, observe that $\cp([4, 3]) = (0, 4)$.  To see that CR is not returnable, consider a $(1,0)$-position  $[3, 2^3, 1]$. 
However, after the removal of the last row, the partition $[3, 2^3]$ leads to either $[2^4]$, $[3,2^2]$,$[2,1^3]$, or $[2^3]$, none of which is a $(1,0)$-position.   
\end{proof}

\section{Future Work}  

\label{sec:future} 
Using our results, one can determine the $\p$/$\n$ status for a little over half of all partitions of integers $50$ or smaller. Appendix \ref{sec:app} shows some small losing partitions that our work does not cover.  In this section, we give directions for future work that would enhance our explanatory power. 

% We now state some conjectures concerning $\crim$. 
For partitions of order at most  $50$ and partitions which are (melds of) rectairs, (thick) hooks, OAE-partitions, EAO-partitions, $2$- and $3$-part partitions, 
even rank is a necessary condition for a partition being in $\p$.\footnote{See \cref{thm:2parts,cor:evensareodd,thm:3parts,thm:hooks,thm:oddsareeven,thm:rectairs}.} 
Our main conjecture asserts that this is always true.

\begin{conj}\label{conj:main}
Every $\p$-position is of even rank.
\end{conj}

The above conjecture  not only holds for (thick) hooks and rectangles which are not rows or columns but is also sufficient for these partitions. 
Note that the converse of \cref{conj:main} fails, for example, on odd staircases. 
Our computations suggest that proving \cref{conj:main}  would explain the $\p/\n$-status of roughly half of remaining partitions, not explained by either \cref{thm:oddsareeven,cor:evensareodd,thm:almost-hook,thm:thick-hook,lem:rectangle,prop:hooksquare,thm:3parts,thm:2parts} (see \cref{tab:data}).

\begin{table}[ht]
\centering
\resizebox{\textwidth}{!}{%
\begin{tabular}{c|*{22}{c}}
%\hline
$n$ & $\le$ 15 &  16 & 17 & 18 & 19 & 20 & 21 & 22 & 23 & 24 & 25 & 26 & 27 & 28 & 29 & 30 \\[1mm] \hline
$p(n)$ & 684  & 231 & 297 & 385 & 490 & 627 & 792 & 1002 & 1255 & 1575 & 1958 & 2436 & 3010 & 3718 & 4565 & 5604 \\ %\hline
% $x(n)$ & 98 & 59 & 87 & 115 & 160 & 219 & 293 & 374 & 490 & 636 & 819 & 1022 & 1292 & 1630 & 2038 & 2508 \\ %\hline
% $y(n)$ & 49 & 26 & 49 & 53 & 84 & 103 & 154 & 178 & 251 & 308 & 423 & 497 & 659 & 798 & 1039 & 1230 \\ %\hline
$u(n)$ & 0.14 & 0.26 & 0.29 & 0.30 & 0.33 & 0.35 & 0.37 & 0.37 & 0.39 & 0.40 & 0.42 & 0.42 & 0.43 & 0.44 & 0.45 & 0.45 \\ %\hline
$c(n)$ & 0.07 & 0.11 & 0.16 & 0.14 & 0.17 & 0.16 & 0.19 & 0.18 & 0.20 & 0.20 & 0.22 & 0.20 & 0.22 & 0.21 & 0.23 & 0.22 \\ %\hline
\end{tabular}
}
\vspace{3mm}
\caption{How much do \cref{thm:oddsareeven,cor:evensareodd,thm:almost-hook,thm:thick-hook,lem:rectangle,prop:hooksquare,thm:3parts,thm:2parts} explain? The first row represents $n$, the size of the partition. The second row is the number $p(n)$ of partitions of size $n$. The third row is the fraction $u(n)$ of partitions of size $n$ whose $\p/\n$-status is not explained by our results. The fourth row is the fraction $c(n)$ of partitions of size $n$ whose $\p/\n$-status is explained by our results, assuming \cref{conj:main} holds.
}
\label{tab:data}
\end{table}

\vspace{-13mm}

\begin{obs}
Suppose \cref{conj:main} holds, and let $\lambda$ be an $\n$ position of even rank. 
Then difference between the first two rows or the first two columns is odd.
Furthermore, the winning response is either the removal or first row or first column.
\end{obs}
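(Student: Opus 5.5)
Assume \cref{conj:main}, and let $\lambda=[\lambda_1,\ldots,\lambda_r]$ be an $\n$-position of even rank, with conjugate $\lambda'=[\lambda'_1,\ldots,\lambda'_{\lambda_1}]$, so $\lambda'_1=r$. Since $\lambda\in\n$, some move $\lambda\to\mu$ lands on a $\p$-position, and by \cref{conj:main} $\mu$ has even rank. The plan is to examine which single moves can change the rank by an even amount and show that only two survive: deleting the first row, or deleting the first column. The claim about the parity of $\lambda_1-\lambda_2$ or $\lambda'_1-\lambda'_2$ will then fall out of the same computation.

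First compute the rank change of each move. Deleting row $i$ with $i\ge 2$ keeps the first row length $\lambda_1$ and lowers the number of rows by one, so the rank $|r-\lambda_1|$ changes parity; this uses $\lambda_1\ge 1$ and that the first row survives. The degenerate case where removal empties the diagram gives $[\,]$ of rank $0$; this needs only a one-line check, e.g.\ $\lambda$ a single row of odd length is not of even rank unless trivial. Symmetrically, deleting column $j\ge 2$ keeps $r$ and lowers $\lambda_1$ by one, again flipping parity. Hence every move other than removing row $1$ or column $1$ goes from even rank to odd rank, so by \cref{conj:main} it cannot reach a $\p$-position. This settles the second sentence: the winning move is the removal of the first row or the first column.

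Now compute the rank after those two moves. Removing the first row gives a partition with $r-1$ rows and first part $\lambda_2$. Its rank $|r-1-\lambda_2|$ has the parity of $r-\lambda_1+(\lambda_1-\lambda_2)-1$, which is even exactly when $\lambda_1-\lambda_2$ is odd, since $r-\lambda_1$ is even. Dually, removing the first column gives rank of parity $\lambda_1-r+(r-\lambda'_2)-1$, which is even exactly when $\lambda'_1-\lambda'_2$ is odd. The winning move must land on even rank, so at least one of these differences is odd, giving the first sentence.

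The only real obstacle is bookkeeping in degenerate cases, and I would check it directly. These are when $\lambda_1-\lambda_2$ is computed with $\lambda_2=0$ (a single row), or when removal of the first row or column empties the diagram, where the ``rank of $[\,]$'' must be taken as $0$. In each case the parity formula above still holds with the conventions $\lambda_2=0$ and $\lambda'_2=0$, so no separate argument beyond a sentence is needed.
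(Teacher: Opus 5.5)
Your argument is correct. The paper records this observation without proof, and your rank-parity bookkeeping is the evident intended justification: every deletion other than row~1 or column~1 changes $r+\lambda_1$ by exactly one, while deleting row~1 (respectively column~1) lands on even rank exactly when $\lambda_1-\lambda_2$ (respectively $\lambda'_1-\lambda'_2$) is odd.

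There is one harmless slip. Your aside that a single row of odd length is not of even rank is false, since $[3]$ has rank $2$. It is never needed, however: deleting a row $i\ge 2$ or a column $j\ge 2$ never empties the diagram, and the emptying moves are covered by your conventions $\lambda_2=0$ and $\lambda'_2=0$.
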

We propose various extensions of the results obtained in \cref{sec:CR}. 
%\subsubsection*{Partitions with small number of parts}
%
%\subsection{Partitions with small number of parts}
In \cref{thm:3parts} we characterized all losing positions for any partition consisting of three parts.
\begin{question}
    Which partitions consisting of four parts are losing?
\end{question}

\begin{question}
    Find Sprague-Grundy values for partitions with few parts. 
\end{question}

\medskip
Concerning rectairs, in \cref{prop:pn-of-families}, we showed that $S_n \in \p$ if and only if $n$ is even, that $\ps_n \in \n$, and that $\dps_n \in \p$. 
We now offer two conjectures that generalize and extend these results by postulating the Sprague-Grundy values of $R^k_{r, r}$ and $R^k_{r, r+1}$. 

\begin{conj} Let $r$ and $k$ be integers with $0 \leq k < r$. Then
    \[
    \sg(R_{r, r}^k)=
    \begin{cases}
        0& r \text{ is even or }k<r-1,\\
        1& r\in \{3,5\}, \, r \text{ is odd, and } k = r-1,\\
        2& \text{otherwise.}
    \end{cases}    
    \]
\end{conj}

\begin{conj} For $r \geq 7$ we have \[\sg(R_{r,r-1}^k)=
    \begin{cases}
    3& k=r-2 \text{ and } r \text{ odd},\\
    1& \text{otherwise.}
    \end{cases}\]
Also, $\sg(R^1_{3, 2}) = \sg(R^3_{5, 4}) = 3$ and $\sg(R^2_{4, 3}) = 2$ and $\sg(R^4_{6, 5} )= 5$. All other rectairs have Sprague-Grundy value $1$. 
\end{conj}

We conjecture that the Sprague-Grundy values of the families $\s_n$ and $\ps_n$ of rectairs are as follows. 
\begin{conj} 
$SG(\s_n) = 0$ if $n$ is even. Also, $SG(\s_n) = 1$ if $n = 1, 3, 5$ and $\mathcal \s_n = 2$ if $n$ is odd and 7 or larger. 
\end{conj}

\begin{conj}
    $SG(\mathcal{\ps}_n) = 2$ if $n = 1, 3$ and $SG(\mathcal{\ps}_n) = 1$ if $n \geq 6$ is even and $SG(\mathcal{\ps}_5) = 5$ and $SG(\mathcal{\ps}_n) = 3$ otherwise. 
\end{conj}

\begin{question}
Find the Sprague-Grundy values of all rectairs. 
\end{question}

\bigskip
\noindent In \cref{lem:hook} we gave a formula for the Sprague-Grundy values of hooks. 
\begin{conj} \label{conj:hookswap}
    A partition $\lambda$ has Conway pair $(0, 1)$ if and only if $\lambda$ is an even-by-even hook. 
\end{conj} 
\noindent Positions with Conway pair $(0,1)$ or $(1,0)$ play an important role in the Conway-Gurvich-Ho classification scheme. Thus, \cref{conj:hookswap} motivates the following question. 
\begin{question}    
    Characterize positions with Conway pair $(1, 0)$. 
\end{question}

\subsection{Understanding operations on partitions}

\cref{thm:rectairs,thm:ltrl} show that 
the meld of certain losing positions remains in $\p$. 
We believe similar is true for thick hooks.
\begin{conj}
    A meld of losing thick hooks is losing.
\end{conj}

Can we obtain other $\p$ positions via simple operations on partitions which we know to be in $\p$? 
To this end, we will introduce two new operations.  

\paragraph{Padding partitions:} For $\lambda = [\lambda_1, \ldots, \lambda_r]$ let $\hat \lambda = [\lambda_1, \lambda_1, \lambda_1, \lambda_2, \ldots, \lambda_r]$. If $\lambda \in \p$, one might believe that $\hat \lambda \in \p$.
In particular, it holds if $\lambda$ is an $\OAE$-partition, an $\EAO$-partition, or a rectair. 
It does not hold in general, since $[4,1]\in \p$ but $[4,4,4,1] \in \n$. The converse also fails since $[5^5, 3^7, 1^7] \in \n$ while $[7^5, 5^7, 3^7] \in \p$.

\begin{question} \label{question:q1}
    Characterize losing partitions $\lambda$ for which $\hat \lambda$ is losing.
\end{question}
\begin{question} \label{question:q2}
    Characterize partitions $\lambda$ for which $\hat \lambda$ losing implies $\lambda$ is losing. 
\end{question}

It similarly misleading to think that if $\lambda \in \p$, then $[\lambda_1, \ldots, \lambda_r, 1, 1] \in \p$. 
In particular, it is true if $\lambda \in \EAO$. 
Again, the implication does not hold in general; $\lambda = [4,3]$ is a counterexample. 
The converse also fails since $[3,1,1] \in \p$ and $[3]\in \n$. 
Can we answer questions in the spirit of 
\cref{question:q1,question:q2} in this context? 
\paragraph{Stacking partitions:}
The \emph{Durfee length} of a partition $\lambda$ is the greatest integer $d$ such that $d \leq \lambda_d$. We denote by $d(\lambda)$ the Durfee length of $\lambda$. 
    Let $\lambda = [\lambda_1,\ldots, \lambda_r]$ and $\mu = [\mu_1, \ldots, \mu_k]$ be partitions such that $\lambda_{d+i}=d$ for all $1\le i\le k$ and 
    $\lambda_d\ge d+\mu_1$, where $d=d(\lambda)$.
    We define their \emph{stack} as
    \[
    \lambda\lhd \mu=[\lambda_1,\dots,\lambda_d,\lambda_{d+1}+\mu_1, \dots,\lambda_{d+k}+\mu_k,\lambda_{d+k+1},\dots,\lambda_r].
    \]   
   If $\lambda\lhd \mu$ is defined, then $\mu\lhd\lambda$ is not. Furthermore, the stack operation is associative and can be extended to multiple partitions.
    Thus, the stack of several partitions is defined for at most one ordering of the partitions.
    See \cref{fig:hook-stack} for an example.

\begin{conj}
\label{conj:bloby}
    Let $\lambda$ and $\Gamma_{r,c}$ be losing and suppose $\lambda^*=\Gamma_{r,c}\lhd\lambda$ is defined. Then $\lambda^*\in \p$.
\end{conj}

 \begin{figure}
     \centering
     \includegraphics[scale=.75]{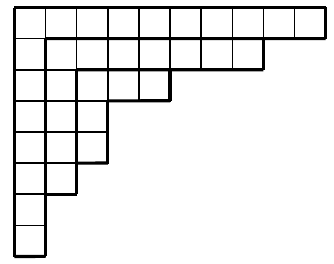}
     \caption{The partition $\Gamma_{8, 10} \lhd \Gamma_{5,7} \lhd \Gamma_{3,3}$.}
     \label{fig:hook-stack}
 \end{figure}

If the above conjecture is true, \cref{prop:hooksquare} follows immediately as a consequence, as does its generalization below.

\begin{obs}
    Assume \cref{conj:bloby}.
    Then a stack of odd hooks, with last hook being $[1]$, is winning. Removing the top row on any such partition is a winning move. 
\end{obs}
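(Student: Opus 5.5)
The plan is to exhibit the resulting position explicitly and show that it is a stack of losing hooks, so that \cref{conj:bloby}, applied repeatedly, gives that it is in $\p$. Write the partition as $\lambda = \Gamma_{r_1,c_1} \lhd \Gamma_{r_2,c_2} \lhd \cdots \lhd \Gamma_{r_{m-1},c_{m-1}} \lhd [1]$. Here each $\Gamma_{r_i,c_i}$ with $i<m$ is an odd hook, so $r_i,c_i \geq 3$ are odd. Put $r_m = c_m = 1$ for the final hook $[1]$.

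\textbf{Unwinding the stack.} A hook has Durfee length $1$. So stacking $\mu$ onto $\Gamma_{r,c}$ simply adds $\mu_j$ to row $j+1$ of $[c,1^{r-1}]$. Concretely, $\lambda$ has top row $c_1$, and its remaining rows are $1$ plus the rows of $\mu := \Gamma_{r_2,c_2} \lhd \cdots \lhd [1]$, padded with $1$'s down to length $r_1$. Removing the top row therefore gives
\[
\bar\lambda = [\,1+\mu_1, 1+\mu_2, \ldots, 1+\mu_k, 1^{r_1-1-k}\,].
\]
Its top row has length $1+c_2$ and its first column has length $r_1-1$. By induction on $m$ I expect the identity
\[
\bar\lambda = \Gamma_{r_1-1,\,c_2+1} \lhd \Gamma_{r_2-1,\,c_3+1} \lhd \cdots \lhd \Gamma_{r_{m-1}-1,\,c_m+1}.
\]
The last factor is $\Gamma_{r_{m-1}-1,2}$, and the would-be factor $\Gamma_{r_m-1,\cdot}$ is empty. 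The reason is that the top row of $\mu$ is absorbed into the first hook, and what remains of $\mu$ is again $\mu$ with its top row removed, so the same description recurses.

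\textbf{Parities and losing status.} Every factor $\Gamma_{r_i-1,\,c_{i+1}+1}$ has both dimensions even and at least $2$. By \cref{lem:hook} (or \cref{thm:cgpairshooks}) each factor is therefore a $\p$-position.

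\textbf{Applying the conjecture.} Working from the bottom of the stack upward, and using associativity of $\lhd$:
\begin{itemize}
\item the bottom stack $\Gamma_{r_{m-1}-1,2}$ is losing;
\item if $\nu$ is a losing stack of the lower factors, then $\Gamma_{r_i-1,c_{i+1}+1} \lhd \nu \in \p$ by \cref{conj:bloby}.
\end{itemize}
Hence $\bar\lambda \in \p$. Consequently removing the top row is a winning move, and $\lambda \in \n$.

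\textbf{Main obstacle.} The delicate step is bookkeeping: one must check that each stack in the chain is actually defined. The rows $d+1,\dots,d+k$ below the Durfee square must have length exactly $d=1$, which requires $r_i-1 > $ (number of rows of the stack below). One also needs $\lambda_1 \geq 1+\mu_1$. I would derive both conditions from the corresponding conditions that make the original stack $\lambda$ defined. Removing the top row shortens every leg by one and shifts each arm by one, so I expect the inequalities to transfer with the same slack. If some equality case fails, I would handle it directly using \cref{prop:hooksquare}.
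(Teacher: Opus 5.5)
Your proposal is correct and is essentially the paper's proof. The paper likewise writes $\bar\lambda = \Gamma_{r_1-1,c_2+1}\lhd\cdots\lhd\Gamma_{r_{m-1}-1,2}$, notes that every factor is an even-by-even hook, and invokes \cref{conj:bloby}. Your bookkeeping worry resolves cleanly: the definedness conditions $r_i-2\ge r_{i+1}-1$ and $c_{i+1}+1\ge c_{i+2}+2$ for $\bar\lambda$ are equivalent to the conditions that make $\lambda$ defined, so no fallback to \cref{prop:hooksquare} is needed. Your bottom-up application of the conjecture is also more explicit than the paper's.
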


\begin{proof} 
    Let $\lambda = \Gamma_{2m_1+1, 2m'_1 + 1} \lhd \Gamma_{2m_2+1, 2m'_2+1} \lhd \cdots \lhd \Gamma_{2m_k+1, 2m'_k+1} \lhd [1]$. 
    Let $\bar \lambda$ be the partition that results from removing the top row of $\lambda$. Then 
\begin{align*}
\bar \lambda &= \Gamma_{2m_1, 2m'_2+2} \lhd \Gamma_{2m_2, 2m'_3+2} \lhd\dots\lhd \Gamma_{2m_k, 2}
\end{align*}
    Because each hook in the stack of $\bar \lambda$ has an even number of rows and an even number of columns, it follows from \Cref{conj:bloby} that $\bar \lambda$ is losing and $\lambda$ is winning. \qedhere
\end{proof}

Before we can state another conjecture related to the stack of partitions, we need to introduce more notation.
We say that a partition $\lambda$ is \emph{parity self-conjugate} if $\lambda_k \equiv \lambda_k' \bmod 2$ for every $k$. Note that parity self-conjugate is a common generalization of self-conjugate and quadrated. 

\begin{conj}
\label{conj:pself}
    A parity self-conjugate partition is in $\n$ if and only if it is the stack of odd self-conjugate hooks and $[1]$. 
\end{conj}

We verified \Cref{conj:pself} for partitions of $n \le 50$.

\backmatter
\vfill
\section*{Declarations}

\begin{itemize}
\item \textbf{Funding:}
The second author was supported by an internal grant from Rhodes College.
The third author was supported in part by the Slovenian Research and Innovation Agency (ARIS), research program P1-0383 and research project N1-0370.
\item \textbf{Conflict of interest:}
None
\item \textbf{Ethics approval and consent to participate:}
All authors consent
\item \textbf{Consent for publication:}
All authors consent
\item \textbf{Data availability:} 
Not applicable
\item \textbf{Materials availability:}
Not applicable
\item \textbf{Code availability:} 
Not applicable
\item \textbf{Author contribution:}
All authors contributed equally towards preparation of this manuscript.
\end{itemize} 

\newpage
\bibliography{bibliography}% common bib file
%% if required, the content of .bbl file can be included here once bbl is generated
%%\input sn-article.bbl

\begin{appendices}
\section{ Some Unexplained Losing Partitions}\label{sec:app}

\centering

\renewcommand{\arraystretch}{2.5} 
\begin{tabular}{m{3cm}m{3cm}m{3cm}m{3cm}}
%1
    \drawpartition{4, 2, 2, 2}
    & \drawpartition{6, 2, 2, 2}
    & \drawpartition{6, 4, 1, 1}
    & \drawpartition{6, 3, 2, 2}\\[5mm]
% row 2
    \drawpartition{6, 4, 2, 1}
    & \drawpartition{4, 4, 3, 3}
    & \drawpartition{6, 4, 2, 2}
    & \drawpartition{6, 6, 1, 1} \\[5mm]
% row 3
    \drawpartition{8, 2, 2, 2}
    & \drawpartition{8, 4, 1, 1}
    & \drawpartition{6, 4, 3, 2}
    & \drawpartition{6, 5, 3, 1} \\[6mm]
%row 4   
    \drawpartition{5, 4, 2, 2, 1}
    & \drawpartition{5, 3, 3, 2, 2}             
    & \drawpartition{5, 3, 2, 2, 2}                 
    & \drawpartition{5, 3, 3, 2, 1} \\[7mm]
% row 5 
    \drawpartition{8, 3, 2, 2} 
    & \drawpartition{8, 4, 2, 1}     
    & \drawpartition{8, 6, 1, 1}
    & \drawpartition{10, 2, 2, 2} \\[5mm]
    
 % row 6   
    \drawpartition{6, 4, 3, 3}
    & \drawpartition{6, 4, 4, 2}
    & \drawpartition{8, 4, 2, 2}
    & \drawpartition{8, 5, 2, 1} \\[6mm]
    
 % row 7   
    \drawpartition{7, 3, 2, 2, 2} 
    & \drawpartition{7, 3, 3, 2, 1} 
    & \drawpartition{7, 4, 2, 2, 1} 
    & \drawpartition{7, 4, 3, 1, 1} \\[8mm]

    % row 8
      \drawpartition{6, 3, 2, 2, 1, 1} 
    & \drawpartition{6, 2, 2, 2, 2, 2} 
    & \drawpartition{6, 3, 2, 2, 2, 1} 
    & \drawpartition{6, 4, 2, 2, 1, 1} \\[10mm]
    
% row 9   
   \drawpartition{6, 2, 2, 2, 1, 1}   
    & \drawpartition{8, 2, 2, 2, 1, 1} 
    & \drawpartition{8, 4, 1, 1, 1, 1} 
    &\\[10mm]

% row 10 
    \drawpartition{7, 5, 2, 1, 1}
    & \drawpartition{5, 5, 2, 2, 2} 
    & \drawpartition{10, 4, 1, 1} 
    & \\[5mm]

   \end{tabular}

%%=============================================%%
%% For submissions to Nature Portfolio Journals %%
%% please use the heading ``Extended Data''.   %%
%%=============================================%%

%%=============================================================%%
%% Sample for another appendix section			       %%
%%=============================================================%%

%% \section{Example of another appendix section}\label{secA2}%
%% Appendices may be used for helpful, supporting or essential material that would otherwise 
%% clutter, break up or be distracting to the text. Appendices can consist of sections, figures, 
%% tables and equations etc.

\end{appendices}

%%===========================================================================================%%
%% If you are submitting to one of the Nature Portfolio journals, using the eJP submission   %%
%% system, please include the references within the manuscript file itself. You may do this  %%
%% by copying the reference list from your .bbl file, paste it into the main manuscript .tex %%
%% file, and delete the associated \verb+\bibliography+ commands.                            %%
%%===========================================================================================%%

\end{document}